\providecommand\given{}
\newcommand\SetSymbol[1][]{%
	\nonscript\:#1{:}
	\allowbreak
	\nonscript\:
	\mathopen{}}
\DeclarePairedDelimiterX\Set[1]\{\}{%
	\renewcommand\given{\SetSymbol}
	#1
}
\DeclarePairedDelimiter{\abs}{\lvert}{\rvert}
\DeclarePairedDelimiterXPP{\norm}[2]{}{\lVert}{\rVert}{_{#2}}{#1}
\DeclarePairedDelimiterXPP{\Altnorm}[3]{}{\lVert}{\rVert}{_{#2}^{#3}}{#1}
\DeclarePairedDelimiterXPP{\hNorm}[2]{}{\lvert}{\rvert}{_{#2}}{#1}
\DeclarePairedDelimiterXPP{\HNorm}[2]{}{\lvert}{\rvert}{_{#2}^\circ}{#1}
\definecolor{darkgreen}{rgb}{0.0, 0.2, 0.13}
\definecolor{darkolivegreen}{rgb}{0.33, 0.42, 0.18}
\definecolor{chamoisee}{rgb}{0.63, 0.47, 0.35}
\definecolor{cerulean}{rgb}{0.0, 0.48, 0.65}
\definecolor{coolgrey}{rgb}{0.55, 0.57, 0.67}
\DeclareMathOperator{\supp}{supp}
\DeclareMathOperator{\Id}{Id}
\newcommand{\N}{\mathbb{N}}
\newcommand{\R}{\mathbb{R}}
\newcommand{\RR}{\mathcal{R}}
\newcommand{\C}{\mathbb{C}}
\newcommand{\E}{\mathcal{E}}
\newcommand{\D}{\mathcal{D}}
\newcommand{\An}{\mathcal{A}}
\newcommand{\G}{\mathcal{G}}
\newcommand{\bG}{\mathbf{G}}
\newcommand{\bL}{\mathbf{L}}
\newcommand{\bN}{\mathbf{N}}
\newcommand{\bV}{\mathbf{V}}
\newcommand{\bT}{\mathbf{T}}
\newcommand{\alp}{{\lvert\alpha\rvert}}
\newcommand{\xit}{{\lvert\xi\rvert}}
\newcommand{\nut}{\lvert\nu\rvert}
\newcommand{\nupt}{\lvert\nu^\prime\rvert}
\newcommand{\Beu}[2]{\mathcal{E}^{( #1 )} ( #2 )}
\newcommand{\Rou}[2]{\mathcal{E}^{\{ #1 \}} ( #2 )}
\newcommand{\DC}[2]{\mathcal{E}^{[ #1 ]} ( #2 )}
\newcommand{\fhRou}[2]{\widetilde{\mathcal{O}}^{\{#1\}}(#2)}
\newcommand{\vBeu}[3][P]{\mathcal{E}^{ (#2) } ( #3 ; #1 )}
\newcommand{\vRou}[3][P]{\mathcal{E}^{\{ #2 \}} \left( #3; #1 \right)}
\newcommand{\vDC}[3][P]{\mathcal{E}^{[ #2 ]} ( #3 ; #1  )}
\newcommand{\bM}{\mathbf{M}}
\newcommand{\eps}{\varepsilon}
\newcommand{\hol}{\mathcal{O}}
\theoremstyle{plain}
\newtheorem{Thm}{Theorem}[section]
\newtheorem{Prop}[Thm]{Proposition}
\newtheorem{Lem}[Thm]{Lemma}
\newtheorem{Cor}[Thm]{Corollary}
\theoremstyle{definition}
\newtheorem{Def}[Thm]{Definition}
\theoremstyle{remark}
\newtheorem{Rem}[Thm]{Remark}
\newtheorem{Ex}[Thm]{Example}
\numberwithin{equation}{section}
\title[Ellipticity and the theorem of iterates]{Ellipticity and the problem of iterates
in Denjoy-Carleman classes}
\author{Stefan F\"urd\"os}
\address{Instituto de Matem\'atica e Estat\'istica, Universidade de S\~{a}o Paulo, Rua do Mat\~{a}o 1010, 05508-090 S\~{a}o Paulo, SP, Brazil}
\email{stefan.fuerdoes@univie.ac.at}
\thanks{Stefan F\"urd\"os is supported by Austrian Science Fund (FWF) grant J4439}
\author{Gerhard Schindl}
\address{Fakult{\"at} f{\"u}r Mathematik, Universit{\"a}t Wien, Oskar-Morgenstern-Platz 1, 1090 Wien, Austria}
\email{gerhard.schindl@univie.ac.at}
\thanks{Gerhard Schindl is supported by Austrian Science Fund (FWF) project P33417.}
\subjclass[2020]{Primary 35B65, Secondary 26E10, 35J99, 46E10}
\keywords{problem of iterates, ultradifferentiable vectors, strong non-quasianalyticity, non-elliptic operators, optimal functions in Denjoy-Carleman classes}
\begin{document}
	\begin{abstract}
		In 1978 M{\'e}tivier showed that a differential operator $P$ with analytic coefficients
		is elliptic if and only if the theorem of iterates holds for $P$ with respect
		to any non-analytic Gevrey class. In this paper we extend this theorem to Denjoy-Carleman
		classes given by strongly non-quasianalytic weight sequences.
		The proof involves a new way to construct optimal functions in Denjoy-Carleman classes, which might be of independent interest.
		
		Moreover, we point out that the analogous statement for Braun-Meise-Taylor 
		classes given by weight functions
		cannot hold. This signifies an important difference in the properties of Denjoy-Carleman classes and Braun-Meise-Taylor classes, respectively.
	\end{abstract}
\maketitle
\section{Introduction}
In this paper we continue our investigation of the problem of iterates using the modern
theory of ultradifferentiable classes which was started in \cite{Fuerdoes2022}.
There has been recently a lot of work regarding the problem of iterates in
various settings, e.g.\ \cite{MR3380075}, \cite{MR3652556},  \cite{Bouzar2003}, \cite{MR3556261},
\cite{MR2801277}, \cite{MR3043156}, \cite{Chaili2018}, \cite{Derridj2019a}, \cite{MR4098643}
and \cite{Vuckovic2016}.
For a survey of the older literature we refer to \cite{MR557524}.

In a simple form the problem of iterates is the following question: If some derivatives
of a smooth function $u$ satisfy certain uniform estimates, does it follow that all derivatives
of $u$ satisfy these estimates?
The first result in this direction appeared in the literature when in 1962 it was proven by Kotake-Narasimhan \cite{KotakeNarasimhan} and Komatsu \cite{komatsu1962} separately that if $P$ is 
an elliptic differential operator of order $d$ with analytic coefficients 
in some open set $\Omega\subseteq\R^n$ then a smooth function
$u\in\E(\Omega)$ is analytic in $\Omega$  if 
for all compact subsets $K$ of $\Omega$ there is a constant $C>0$ such that
\begin{equation}\label{AnalyticVectors}
	\norm*{P^k u}{L^2(K)}\leq C^{k+1} (dk)!
\end{equation}
for all non-negative integers $k\in\N_0$.
Similar results have been obtained by Nelson \cite{MR107176} for elliptic systems of analytic vector fields.
Functions, which satisfy \eqref{AnalyticVectors}, are usually referred to as analytic vectors of 
the operator $P$.

Generally, given an ultradifferentiable structure\footnote{
	An ultradifferentiable structure is in our context  a sheaf of smooth functions which is defined by estimates on the derivatives of its elements.} $\mathcal{U}$
we say that a smooth function $u$ is an ultradifferentiable vector of class $\mathcal{U}$ 
(or short: a $\mathcal{U}$-vector) of the operator $P$ if the iterates $P^ku$ satisfy the
defining estimates of $\mathcal{U}$.
The problem of iterates for a given operator 
is then the study of the regularity of ultradifferentiable vectors of this operator.
In various different settings it has been proven that if the operator $P$ is elliptic then
the theorem of iterates for $P$ holds, i.e.\
every ultradifferentiable vector of $P$ is an ultradifferentiable function of the same class, see e.g.\ \cite{MR3652556}, \cite{MR548225}, \cite{MR557524}, \cite{MR1875423}, \cite{MR2801277} and \cite{HoepfnerRampazo}.


On the other hand, the problem of iterates for non-elliptic operators was for example considered in
\cite{MR654409}, \cite{MR632764},\cite{MR4098643} and \cite{Fuerdoes2022}.
In particular, M\'{e}tivier \cite{doi:10.1080/03605307808820078} showed that for
any non-elliptic differential operator $P$ with analytic coefficients and any non-analytic 
Gevrey class $\G^s$, i.e.~$s>1$, there is an $s$-Gevrey vector of $P$
 which is not an $s$-Gevrey function.
 The question, which one can now pose, is the following: Can we extend this result 
 to more general classes?
 
In the literature there are mainly two families of ultradifferentiable classes which generalize 
the family of Gevrey classes: First, the Denjoy-Carleman classes given by weight sequences,
see \cite{Komatsu73} and, secondly, the Braun-Meise-Taylor classes given by weight functions,
in the modern form introduced in \cite{MR1052587}. We may note that these classes do not necessarily
coincide according to \cite{BonetMeiseMelikhov07}.

In \cite{Fuerdoes2022} the authors extended M{\'e}tivier's theorem to Denjoy-Carleman classes given by the
sequences $(q^{k^2})_k$, where $q>1$ is a parameter.
On the other hand,  \cite[Theorem 1.1]{Fuerdoes2022} states that the theorem of iterates holds
for analytic-hypoelliptic operators of principal type with respect to Braun-Meise-Taylor classes
given by a certain subfamiliy of weight functions.

In this article, we will prove that M\'{e}tivier's Theorem holds for every strongly non-quasianalytic
weight sequence.
Using the Borel map we can give an invariant version of our result. Recall that the Borel map 
$\lambda_0$
takes every smooth function to the sequence of its derivatives evaluated at the origin.
Clearly the image of any ultradifferentiable structure $\mathcal{U}$ is contained in 
the space $\Lambda_{\mathcal{U}}$ of sequences which satisfy analogous estimates.
We call $\lambda_0:\, \mathcal{U}\rightarrow\Lambda_{\mathcal{U}}$ the Borel map
associated to the structure $\mathcal{U}$.
Applying Petzsche's work \cite{Petzsche1988} on the Borel map associated to Denjoy-Carleman classes 
we obtain the following version of our main theorem:
\begin{Thm}\label{BorelThm}
	If the Borel map associated to the Denjoy-Carleman class $\E^{[\bM]}$ is surjective then
	for any non-elliptic differential operator $P$ with analytic coefficients there is
	an ultradifferentiable vector $u$ of class $[\bM]$ of $P$ which is not an element 
	of $\E^{[\bM]}$.
\end{Thm}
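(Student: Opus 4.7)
The plan is to view Theorem~\ref{BorelThm} as a direct corollary of the Denjoy-Carleman version of M\'etivier's theorem announced in the abstract (the main theorem of the paper) together with Petzsche's characterization of when the Borel map associated to a Denjoy-Carleman class is surjective. In other words, the only real work is a translation of hypotheses via \cite{Petzsche1988}; the analytic substance sits in the main theorem.

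\textbf{Steps.} First, I would invoke Petzsche's result \cite{Petzsche1988}: the Borel map $\lambda_0\colon \E^{[\bM]}\to\Lambda_{[\bM]}$ is surjective if and only if the weight sequence $\bM$ is strongly non-quasianalytic, in the variant appropriate to the bracket convention $[\bM]$ (Roumieu or Beurling, together with the standard structural assumptions on $\bM$ already implicit in speaking of the class $\E^{[\bM]}$). Thus the hypothesis of Theorem~\ref{BorelThm} is equivalent to $\bM$ being strongly non-quasianalytic. Second, I would apply the paper's main theorem, which asserts that for every strongly non-quasianalytic weight sequence $\bM$ and every non-elliptic differential operator $P$ with analytic coefficients there exists an ultradifferentiable vector $u$ of class $[\bM]$ of $P$ that fails to lie in $\E^{[\bM]}$. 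The conclusion of Theorem~\ref{BorelThm} follows immediately.

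\textbf{Main obstacle.} The substantive difficulty is entirely concealed in the main theorem, whose proof, as advertised in the abstract, rests on a new construction of \emph{optimal functions} in Denjoy-Carleman classes; producing a non-$\E^{[\bM]}$ function whose $P$-iterates nevertheless respect the $[\bM]$-bounds is where non-ellipticity of $P$ is genuinely exploited. Once that construction and the resulting M\'etivier-type statement are in place, the deduction of Theorem~\ref{BorelThm} is bookkeeping: one must match the Roumieu versus Beurling forms of strong non-quasianalyticity across Petzsche's equivalence, and verify that the regularity hypotheses on $\bM$ needed to define $\E^{[\bM]}$ and to interpret the iterates $P^ku$ (log-convexity and, where relevant, derivation-closedness) coincide with, or are implied by, the ones Petzsche requires for his surjectivity characterization.
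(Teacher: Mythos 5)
Your proposal matches the paper's own proof exactly: the paper proves Theorem~\ref{BorelThm} in Remark~\ref{BorelRemark} by citing Petzsche's result that $\lambda_0\vert_{\DC{\bM}{[-1,1]}}$ is surjective (in either the Roumieu or Beurling case) if and only if $\bM$ is strongly non-quasianalytic, and then invoking Theorem~\ref{MainThm}. Your identification of where the real analytic content lies (the main theorem and its optimal-function construction) and of the remaining bookkeeping is accurate.
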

For the notations and definitions used above see the next section and  
for the precise context of Theorem \ref{BorelThm} we refer in particular to 
Remark \ref{BorelRemark}.
Moreover, we need to point out that the analogous statement for Braun-Meise-Taylor classes 
is not true.
In fact, if we consider the subfamily of weight functions from \cite[Theorem 1.1]{Fuerdoes2022}
mentioned above, then we observe that the Borel map associated to the Braun-Meise-Taylor classes defined by these weight functions is surjective, see Remark \ref{BMTRemark} for more details.
This shows a significant difference between Denjoy-Carleman classes and Braun-Meise-Taylor classes 
regarding the problem of iterates.

The proof of our main theorem is a modification of the arguments in \cite{doi:10.1080/03605307808820078}.
In particular, the vector $u$ in the assertion of Theorem \ref{BorelThm}
 is constructed as a Fourier-type integral, which 
 is a smooth function but there is a weight sequence $\bN$, depending only on $\bM$ and $P$
  such that
 $\E^{\{\bN\}}$ is strictly larger than $\E^{\{\bM\}}$ and $u$ is not  an element
 in any Denjoy-Carleman class strictly smaller than $\E^{\{\bN\}}$.
 Moreover, if the original class $\E^{\{\bM\}}$ is closed under derivation we can show
 that $u$ is indeed an element of $\E^{\{\bN\}}$, i.e.\
$u$ is an optimal function of the class $\E^{\{\bN\}}$.
Historically, the construction of optimal functions in Denjoy-Carleman classes was mainly done by using
Fourier series, see e.g.~\cite{MR2384272}.
We believe that this new approach using integrals to construct optimal functions will have various
applications.
It uses an integral kernel, which is very closely related to the so-called $\{\bN\}$-optimal flat
functions introduced in \cite{JimenezGarrido2022a}, which are holomorphic functions defined in a sector of the Riemann
surface of the logarithm whose asymptotic expansion at the origin is the zero series and
the rate of convergence is exactly determined by the weight sequence $\bN$.
We should also note that the existence of such optimal $\{\bN\}$-flat functions, see
\cite{JimenezGarrido2022a}, is closely related
to the non-vanishing of an invariant $\gamma(\bN)$ of the weight sequence $\bN$ which was first introduced by Thilliez \cite{Thilliez2003}.
This invariant itself will also play a significant role in the proof of our main theorem.

The paper is structured in the following way: We present the fundamental definitions of Denjoy-Carleman 
classes and the precise formulation of our main results in  section \ref{MainResults}.
In section \ref{weight} we assemble the technical results we need for the proof of these statements.
The proofs themselves are contained in section \ref{proofs}.
In the last section \ref{OptimalSection} we discuss the optimality of the functions constructed in section \ref{proofs}.
\section{Statement of main results}\label{MainResults}
In this note $\Omega$ will always denote an open subset of the Euclidean space $\R^n$
and $\N$ is the set of positive integers.
\begin{Def}
In our setting a weight sequence is a sequence $\bM=(M_k)_{k\in\N_0}$ of positive numbers
such that $M_0=1\leq M_1$ and the sequence $\mu_k=M_k/M_{k-1}$ is increasing and satisfies
$\lim_{k\rightarrow\infty}\mu_k=\infty$.
\end{Def}
\begin{Rem}
	The fact that the sequence $(\mu_k)_{k\in\N}$ is increasing is equivalent to the logarithmic convexity of the weight sequence $(M_k)_k$, i.e.\
\begin{equation}\label{logconvexity}
	M_k^2\leq M_{k-1}M_{k+1}
\end{equation}
for all $k\in\N$.
Then it follows that the sequence $\sqrt[k]{M_k}$ is increasing and
\begin{equation}\label{Limit2}
	\lim_{k\rightarrow\infty}\sqrt[k]{M_k}=\infty.
\end{equation}
	\end{Rem}

If $K$ is a compact subset of $\R^n$ then $\E(K)$ is the space of smooth functions $f$ for which
there is a neighborhood $U_f$ of $K$ such that $f\in\E(U_f)$.
 If $\bM$ is a weight sequence then we can define two spaces of ultradifferentiable functions on 
 $K$: First, a smooth function $f\in\E(K)$ is an element of the Roumieu class $\Rou{\bM}{K}$
 associated to $\bM$ if
 there are constants $C,h>0$ such that
 \begin{equation}\label{BasicEstimate0}
 	\sup_{x\in K}\,\abs*{D^\alpha f(x)}\leq Ch^\alp M_\alp
 \end{equation}
for all $\alpha\in\N_0^n$. On the other hand a function $u\in\E(K)$ is an element of the Beurling
class $\Beu{\bM}{K}$ if for all $h>0$ there is a constant $C>0$ such that \eqref{BasicEstimate0}
holds for all $\alpha\in\N_0^n$.
We will use the notation $[\bM]=\{\bM\},(\bM)$ throughout the paper to refer
simultaneously to the Roumieu and Beurling case.
If $\Omega\subseteq\R^n$ is an open set then the local classes $\DC{\bM}{\Omega}$
are given in the following way: A smooth function $f\in\E(\Omega)$ is an element of 
$\DC{\bM}{\Omega}$ if for all compact subsets $K$ of $\Omega$ we have that
$f\vert_K\in\DC{\bM}{K}$.
For more details on the functional analytic structure of these spaces we refer to \cite{Komatsu73}.

If $P$ is a differential operator of order $d$
with coefficients in $\Rou{\bM}{\Omega}$ for some weight
sequence $\bM$ then we say that
 a distribution
$u\in\D^\prime(\Omega)$ is an ultradifferentiable Roumieu vector associated to $\bM$ of $P$
(or $u$ is an $\{\bM\}$-vector of $P$) if $P^ku\in L^2_{loc}(\Omega)$ 
for all $k\in\N_0$\footnote{ We use here the convention $P^0=\Id$.}
and for all compact $K\subseteq\Omega$ there are
constants $C,h>0$ such that
\begin{equation}\label{VectorEstimate}
\norm*{P^ku}{L^2(K)}\leq Ch^k M_{dk}
\end{equation}
for all $k\in\N_0$.
On the other hand, if the coefficients of $P$ are in $\Beu{\bM}{\Omega}$ then
 the distribution $u$ is an ultradifferentiable Beurling vector associated
to $\bM$ of $P$ (short: $(\bM)$-vector of $P$) if
$P^ku\in L^2_{loc}(\Omega)$ for every $k\in\N_0$ and for all compact $K\subseteq$ $\Omega$ 
and all $h>0$
there is some constant $C>0$ such that \eqref{VectorEstimate} is satisfied for all $k\in\N_0$.
The space of all $[\bM]$-vectors of $P$ is denoted by $\vDC{\bM}{\Omega}$.
We have always $\DC{\bM}{\Omega}\subseteq\vDC{\bM}{\Omega}$, cf.~\cite{https://doi.org/10.48550/arxiv.2212.11905}.
If the weight sequence $\bM$ satisfies
\begin{equation}\label{AnalInclusion}
	\lim_{k\rightarrow\infty} \sqrt[k]{\frac{M_k}{k!}}=\infty,
\end{equation}
then $\DC{\bM}{\Omega}\subseteq\vDC{\bM}{\Omega}$ for all operators $P$ with analytic coefficients,
since
condition \eqref{AnalInclusion} means in particular 
that the space of analytic functions $\An(\Omega)$ is (strictly) contained in $\DC{\bM}{\Omega}$.

If $P$ is an elliptic differential operator with analytic coefficients 
and the weight sequence $\bM$ satisfies both \eqref{AnalInclusion} and
\begin{equation}\label{DerivClosed}
	\sup_{k\in\N}\sqrt[k]{\frac{M_{k}}{M_{k-1}}}<\infty
\end{equation}
then
we have that the theorem of iterates holds, i.e.~$\vDC{\bM}{\Omega}=\DC{\bM}{\Omega}$.
The Roumieu case was proven in \cite{MR557524}, for the Beurling case see
\cite{Fuerdoes2022}.
We may note that \eqref{DerivClosed} implies that $\DC{\bM}{\Omega}$ is closed under derivation.

Our main theorem gives a converse to this statement.
We say a weight sequence $\bM$ is strongly non-quasianalytic if there is a constant $A$ such that
\begin{equation}\label{StrongQuasi}
	\sum_{k=j+1}^\infty\frac{M_{k-1}}{M_k}\leq A(j+1)\frac{M_j}{M_{j+1}}
\end{equation}
for all $j\in\N_0$.
\begin{Thm}\label{MainThm}
	Let $P$ be a non-elliptic differential operator with analytic coefficients in some open set $\Omega\subseteq\R^n$.
If $\bM$ is a strongly non-quasianalytic weight sequence
then there is a smooth function $u\in\E(\Omega)$ such that 
\begin{equation*}
u\in\vBeu{\bM}{\Omega}\!\setminus\!\Rou{\bM}{\Omega}.
\end{equation*}
\end{Thm}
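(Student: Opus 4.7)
The plan is to adapt M\'etivier's oscillatory integral construction from \cite{doi:10.1080/03605307808820078} to the Denjoy-Carleman setting. Non-ellipticity of $P$ first yields a point $x_0\in\Omega$ and a unit covector $\xi_0\in\R^n\setminus\{0\}$ with $p_d(x_0,\xi_0)=0$, where $p_d$ is the principal symbol of $P$. After an affine change of coordinates I may assume $x_0=0$ and $\xi_0=e_n$, and I look for $u$ of the form
\[
u(x)=\chi(x)\int_0^{\infty} e^{i\tau x_n}\,g(\tau)\,d\tau,
\]
where $\chi\in\D(\R^n)$ is a cutoff equal to $1$ in a sufficiently small neighborhood of the origin and $g$ is an amplitude to be built from $\bM$ and the order $d$.

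The heart of the proof lies in the construction of $g$. Using strong non-quasianalyticity, together with Petzsche's description of the Borel map and the positivity of the Thilliez invariant $\gamma(\bM)>0$, I would introduce an auxiliary weight sequence $\bN$ depending only on $\bM$ and $d$, and then take $g$ as (essentially) the Laplace transform along a ray of an optimal $\{\bN\}$-flat holomorphic function on a sector in the spirit of \cite{JimenezGarrido2022a}. The decisive feature of $g$ is a two-sided control on its moments: for every $h>0$ one must have $\int_0^{\infty}\tau^{dk}\abs*{g(\tau)}\,d\tau\leq C_h h^k M_{dk}$, which feeds into the Beurling side, whereas the signed moments $\int_0^{\infty}\tau^k g(\tau)\,d\tau$ must grow faster than $Ch^k M_k$ for every fixed $C,h>0$, which forces $u$ out of $\Rou{\bM}{\Omega}$.

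With such a $g$ in hand, both conclusions follow by relatively routine computations. For the vector estimate, applying $P^k$ to $u$ and expanding in terms of the derivatives of $v(x_n):=\int_0^{\infty} e^{i\tau x_n}g(\tau)\,d\tau$ produces leading terms of the form $a_d(x)^k\,v^{(dk)}(x_n)$, where the factor $a_d$ vanishes at the origin because of the characteristic condition. Using the smallness of $\chi$'s support to absorb the resulting powers of $\abs*{x}$, one obtains $\norm*{P^ku}{L^2(K)}\leq C_h h^k M_{dk}$ for every $h>0$, hence $u\in\vBeu{\bM}{\Omega}$. For the failure of the Roumieu bound, restricting to the $x_n$-axis shows that $\partial_{x_n}^k u(0)$ are exactly the moments of $g$, which by construction exceed any bound of the form $Ch^k M_k$. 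The main obstacle throughout is the construction of $g$ with the two competing moment asymptotics above; strong non-quasianalyticity, via $\gamma(\bM)>0$ and the resulting optimal $\{\bN\}$-flat functions, is precisely what makes this simultaneous realization possible.
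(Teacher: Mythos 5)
Your building blocks are the right ones (optimal $\{\bN\}$-flat functions, the Thilliez invariant $\gamma(\bM)>0$, an auxiliary sequence $\bN$ built from $\bM$ and the order $d$), but the architecture of the construction is wrong in a way that matters, and as written the argument does not close.

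The decisive feature of M\'etivier's construction, which the paper retains, is not a fixed cutoff $\chi(x)$ multiplying the oscillatory integral, but a cutoff that \emph{scales with the integration variable}:
\[
u(x)=\int_1^{\infty}\psi\bigl(t^{\varepsilon}(x-x_0)\bigr)\,\Phi_\bN(t)\,e^{it\xi_0\cdot(x-x_0)}\,dt .
\]
For fixed $t$, the integrand is supported in $\abs{x-x_0}\lesssim t^{-\varepsilon}$, and this \emph{shrinking} support is what produces the symbol estimate $\abs{p(x,t\xi_0)}\leq D\,t^{d-\varepsilon}$ on the support of the amplitude. The gain from $t^{d}$ to $t^{d-\varepsilon}$ is the entire mechanism behind the Beurling bound: after iterating $P$ through the integral one obtains (Lemma~\ref{IteratesLemma}) an amplitude bound of the form $\abs{Q_k(x,t)}\lesssim A^k\bigl(t^{(d-\varepsilon)k}+t^{\varepsilon(2d-1)k}L_{dk}\bigr)$, and then the factor $M_{dk}$ is extracted by writing $\rho^{dk}\leq M_{dk}\exp(\omega_\bM(\rho))$ with $\rho=t^{1-\varepsilon/d}$ and absorbing the leftover exponential into the decay $\exp(-\omega_\bN(t/B_2))$ of $\Phi_\bN$. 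With a fixed cutoff $\chi$, the support does not shrink, the symbol bound $\abs{p(x,t\xi_0)}\leq D t^{d-\varepsilon}$ is unavailable, and no factor $(C\delta)^k$ can substitute for it.

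Relatedly, the two-sided moment conditions you ask of $g$ cannot be met by the one-signed kernel $\Phi_\bN$ you describe, and in the correct proof they are never needed. By Proposition~\ref{SanzProp} the moments of $\Phi_\bN$ satisfy a \emph{two-sided Roumieu} estimate against $\bN$: $Q_1^{k+1}N_k\leq\int_0^\infty t^k\Phi_\bN(t)\,dt\leq Q_2^{k+1}N_k$. The lower bound alone gives $u\notin\Rou{\bM}{\Omega}$ once $\bM\precnapprox\bN$. By contrast, your proposed Beurling-type bound $\int_0^\infty\tau^{dk}\abs{g(\tau)}\,d\tau\leq C_h h^k M_{dk}$ for every $h>0$ is incompatible with $\bM\precnapprox\bN$ when $g$ has one sign: for $d=1$ it contradicts the lower bound directly, and for $d>1$ a H\"older interpolation of the log-convex moment sequence between indices $dk$ and $d(k+1)$ shows that it also strangles the required lower bound at the intermediate indices. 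The upper bound on $\norm{P^ku}{L^2}$ in the actual proof does \emph{not} come from smallness of the moments of the kernel at all; it comes from the interplay between the scaling cutoff, the amplitude recursion, and a separate interpolation lemma (Lemma~\ref{D-AuxLemma1}) splitting the amplitude into a part controlled by $\bM$ and a part controlled by $\bV$ with $\bL\bV\preceq\bM$.

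Finally, even setting the cutoff issue aside, identifying the leading term of $P^k u$ as $a_d(x)^k v^{(dk)}(x_n)$ and then waving away the lower-order contributions is not justified: derivatives falling on the variable coefficients produce terms without the $a_d(x)^k$ suppression, and controlling these requires exactly the inductive amplitude estimate of the paper (with a nontrivial splitting into $\rho^{dk}L_{\nut}$ and $R^{dk}L_{\nut+dk}$ contributions), not a one-line principal-symbol argument.
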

\begin{Rem}
	We may point out that the proof of Theorem \ref{MainThm} gives actually a slightly stronger result: There is a weight sequence
	$\widetilde{\bM}$ depending on $\bM$ and $P$ such that $u\in\vRou{\widetilde{\bM}}{\Omega}\subseteq\vBeu{\bM}{\Omega}$.
\end{Rem}

Since \eqref{StrongQuasi} implies \eqref{AnalInclusion} we obtain immediately as a corollary of Theorem \ref{MainThm} a generalization of \cite[Theorem 1.2]{doi:10.1080/03605307808820078}:
\begin{Cor}\label{MetivierCor}
 Let $P$ be a differential operator with analytic coefficients in $\Omega$ and 
 $\bM$ be a strongly non-quasianalytic weight sequence which also satisfies \eqref{DerivClosed}.
Then the following statements are equivalent.
\begin{enumerate}
	\item The operator $P$ is elliptic in $\Omega$.
	\item $\vRou{\bM}{\Omega}=\Rou{\bM}{\Omega}$.
	\item $\vBeu{\bM}{\Omega}=\Beu{\bM}{\Omega}$.
\end{enumerate}
\end{Cor}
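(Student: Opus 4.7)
The plan is to prove the three implications as a short cycle, invoking Theorem \ref{MainThm} for the non-trivial directions and the already-cited classical results for the implications from ellipticity.

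First I would handle $(1)\Rightarrow(2)$ and $(1)\Rightarrow(3)$. These are the positive directions of the theorem of iterates. As noted immediately before the corollary, provided $\bM$ satisfies both \eqref{AnalInclusion} and \eqref{DerivClosed}, then for any elliptic $P$ with analytic coefficients we have $\vRou{\bM}{\Omega}=\Rou{\bM}{\Omega}$ (by \cite{MR557524}) and $\vBeu{\bM}{\Omega}=\Beu{\bM}{\Omega}$ (by \cite{Fuerdoes2022}). Condition \eqref{DerivClosed} is assumed directly, and \eqref{StrongQuasi} implies \eqref{AnalInclusion} (this implication is recorded in the text just before the corollary). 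Thus both $(1)\Rightarrow(2)$ and $(1)\Rightarrow(3)$ are immediate.

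Next I would prove the contrapositives of $(2)\Rightarrow(1)$ and $(3)\Rightarrow(1)$. Assume $P$ is not elliptic. Then Theorem \ref{MainThm} furnishes a smooth function
\begin{equation*}
u\in\vBeu{\bM}{\Omega}\setminus\Rou{\bM}{\Omega}.
\end{equation*}
The key observation is that, directly from the defining estimates \eqref{BasicEstimate0} and \eqref{VectorEstimate}, one has the inclusions $\Beu{\bM}{\Omega}\subseteq\Rou{\bM}{\Omega}$ and $\vBeu{\bM}{\Omega}\subseteq\vRou{\bM}{\Omega}$, since a Beurling estimate (all $h>0$) is in particular a Roumieu estimate (some $h>0$). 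Consequently, since $u\notin\Rou{\bM}{\Omega}$ we have $u\notin\Beu{\bM}{\Omega}$, so $u\in\vBeu{\bM}{\Omega}\setminus\Beu{\bM}{\Omega}$, contradicting (3). On the other hand, $u\in\vBeu{\bM}{\Omega}\subseteq\vRou{\bM}{\Omega}$ while $u\notin\Rou{\bM}{\Omega}$, so $u\in\vRou{\bM}{\Omega}\setminus\Rou{\bM}{\Omega}$, contradicting (2). This closes the equivalence.

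There is essentially no obstacle in this argument once Theorem \ref{MainThm} is available; the only point of care is making sure that the two hypotheses \eqref{AnalInclusion} and \eqref{DerivClosed} required by the cited positive results are on hand, which is guaranteed by strong non-quasianalyticity together with the standing assumption \eqref{DerivClosed}.
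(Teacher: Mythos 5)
Your proposal is correct and follows exactly the route the paper intends (and states only in one line): the implications from ellipticity are the already-cited theorem-of-iterates results, valid here because strong non-quasianalyticity gives \eqref{AnalInclusion} and \eqref{DerivClosed} is assumed, while the converses follow from Theorem \ref{MainThm} together with the elementary inclusions $\Beu{\bM}{\Omega}\subseteq\Rou{\bM}{\Omega}$ and $\vBeu{\bM}{\Omega}\subseteq\vRou{\bM}{\Omega}$. You have merely made explicit the bookkeeping that the paper compresses into the phrase ``we obtain immediately''.
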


\begin{Ex}
	\begin{enumerate}
\item
	Let $s\geq 1$. The Gevrey sequence $\bG^s=((k!)^s)_k$ is a strongly non-quasianalytic
	weight sequence if $s>1$.
	Furthermore it is easy to see that $\bG^s$ satisfies \eqref{DerivClosed} for all $s\geq 1$.
	We will denote the Gevrey class of order $s$ by $\G^s(\Omega)=\Rou{\bG^s}{\Omega}$.
 \item Let $q>1$ and $r>1$. The weight sequence $\bN^{q,r}$ given by $N^{q,r}_k=q^{k^r}$
 is strongly non-quasianalytic for every $q>1$ and $r>1$ but \eqref{DerivClosed} holds
 if and only if $1<r\leq 2$.
 \item Let $\sigma> 0$. The weight sequence $\bL^\sigma$ given by
 	$L^\sigma_k=k!(\log(e+k))^{\sigma k}$
is not strongly non-quasianalytic but satisfies \eqref{AnalInclusion} and \eqref{DerivClosed}
for every $\sigma>0$.
	\end{enumerate}
\end{Ex}

\begin{Rem}\label{BorelRemark}
	Let $\Lambda$ be the space of all complex sequences $a=(a_j)_j$ and
	if $\bM$ is a weight sequence then
	let us denote by $\Lambda_{\{\bM\}}$ the space of all complex sequences $a\in\Lambda$
	 for which there are constants $C,h>0$ such that
	\begin{equation}\label{FormalCoeff}
		\abs*{a_j}\leq Ch^jM_j
	\end{equation}
for all $j\in\N_0$.
The set $\Lambda_{(\bM)}$ consists of all $a\in\Lambda$ such that for every $h>0$ there is
some $C>0$ so that \eqref{FormalCoeff} holds for every $j\in\N_0$.

The Borel map  $\lambda_0:\,\E([-1,1])\rightarrow \Lambda$ at the origin is defined by
\begin{equation*}
	\lambda_0(f):=\left(f^{(j)}(0)\right)_{j\in\N_0},\qquad f\in\E([-1,1]).
\end{equation*}
Clearly, $\lambda_0(\DC{\bM}{[-1,1]})\subseteq \Lambda_{[\bM]}$.
	According to \cite{Petzsche1988} the restricted Borel map 
	\begin{equation*}
		\lambda_0\vert_{\DC{\bM}{[-1,1]}}:\DC{\bM}{[-1,1]}\longrightarrow \Lambda_{[\bM]}
	\end{equation*}
is surjective
	 if and only if the weight sequence $\bM$ is
	strongly non-quasianalytic in both the Roumieu and Beurling case.
	Combining this fact with Theorem \ref{MainThm} gives instantly Theorem \ref{BorelThm}.
\end{Rem}
\begin{Rem}\label{BMTRemark}
	If we consider Braun-Meise-Taylor classes $\DC{\omega}{\Omega}$ given by weight functions $\omega$, (for a definition of $\DC{\omega}{\Omega}$ and $\vDC{\omega}{\Omega}$ see e.g.~\cite{Fuerdoes2022}) instead of Denjoy-Carleman classes then the 
	 analogous versions of Theorem \ref{MainThm}, resp.~ Theorem \ref{BorelThm} 
	 (and thus also Corollary \ref{MetivierCor}) cannot hold.
	In fact, according to  \cite[Theorem 1.1]{Fuerdoes2022} we have that
	$\vDC{\omega}{\Omega}=\DC{\omega}{\Omega}$ 
	holds for analytic-hypoelliptic operators $P$ of principal type and 
weight functions $\omega$ for which there is 
	a constant $H>0$ such that
	\begin{equation}\label{omega8}
	\omega\bigl(t^2\bigr)=O(\omega (Ht)) \qquad \text{for } t\rightarrow \infty.
	\end{equation}
On the other hand, according to \cite{Bonet1992} the Borel map associated to $\E^{[\omega]}$ is surjective if and only if
$\omega$ satisfies the strong non-quasianalyticity condition for weight functions:
\begin{equation}\label{omegastrong}
\int_1^\infty\!\frac{\omega(ty)}{t^2}\,dt =O(\omega(y))\qquad \text{for } y\rightarrow \infty.
\end{equation}
It follows from \cite[Lemma A.1]{sectorialextensions1} and \cite[Lemma 4.3]{sectorextensions}
that \eqref{omega8} implies \eqref{omegastrong}.
\end{Rem}

Finally we note that in the proof of Theorem \ref{MainThm} the assumption that the coefficients
of the operator $P$ are analytic is not really necessary. In fact, we only need that
the coefficients are regular enough relative to the weight sequence $\bM$.
However,  the analyticity of the coefficients allows us to write down our main Theorem \ref{MainThm}
in a concise form. 
Nevertheless, in the case of Gevrey classes we give another generalization of 
\cite[Theorem 1.2]{doi:10.1080/03605307808820078}:
\begin{Thm}\label{GevreyThm}
	Let $1\leq r<s$. If $P$ is a differential operator with coefficients in $\G^r(\Omega)$
	for some open set $\Omega\subseteq\R^n$ then the following statements are equivalent:
	\begin{enumerate}
		\item $P$ is elliptic in $\Omega$.
		\item $\G^s(\Omega,P)=\G^s(\Omega)$.
	\end{enumerate}
\end{Thm}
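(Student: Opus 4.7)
The plan is to prove both implications separately. The implication (1) $\Rightarrow$ (2) is the Gevrey-coefficient version of the classical Kotake--Narasimhan / Bolley--Camus theorem of iterates: for elliptic $P$ with coefficients in $\G^r$ one has $\G^s(\Omega,P)=\G^s(\Omega)$ whenever $r\leq s$. This is a standard result, documented e.g.\ in the Bolley--Camus--Mattera framework, and it can be deduced by combining the analytic-coefficient version quoted in the introduction with the inclusion $\G^r\subseteq\G^s$ via a Gevrey pseudodifferential calculus. I would therefore not reprove it here.

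The implication (2) $\Rightarrow$ (1) is the substantive direction. I would prove it by contraposition, following the Fourier-type integral construction that proves Theorem \ref{MainThm} with the specific choice $\bM=\bG^s$. Since $s>1$, the sequence $\bG^s$ is strongly non-quasianalytic and satisfies \eqref{DerivClosed}, so the technical machinery assembled in Sections \ref{weight} and \ref{proofs} applies with no change. The only step in that proof which genuinely uses analyticity of the coefficients of $P$ is the estimate of $\norm*{P^k u}{L^2(K)}$, where $P^k u$ is expanded by Leibniz into terms involving derivatives $D^\alpha a_\gamma$ of the coefficients of total order $\lvert\alpha\rvert\leq dk$. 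For analytic coefficients these are controlled by $Ch^{\lvert\alpha\rvert}\lvert\alpha\rvert!$; under the $\G^r$-assumption one has only the weaker bound $Ch^{\lvert\alpha\rvert}(\lvert\alpha\rvert!)^r$.

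I expect the main obstacle to be a careful bookkeeping showing that this weaker coefficient bound still yields the target $\G^s$-vector estimate \eqref{VectorEstimate}, namely $\norm*{P^ku}{L^2(K)}\leq C h^k ((dk)!)^s$. The crucial arithmetic point is that the inflation of the coefficient contribution from $((dk)!)$ to $((dk)!)^r$ is absorbed by the gap $((dk)!)^{s-r}$ between the two Gevrey orders, which is strictly positive because $r<s$. Redoing the phase, amplitude and stationary-phase-type estimates in the construction with the $\G^r$-bound in place of the analytic bound therefore produces the same vector $u\in\G^s(\Omega,P)$. The lower bound witnessing $u\notin\G^s(\Omega)$ only involves the non-ellipticity of $P$ at some covector and transfers verbatim. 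This realizes the observation in the remark preceding the theorem that analyticity in Theorem \ref{MainThm} can be relaxed to any regularity \emph{sufficient relative to $\bM$}, and in the Gevrey setting the quantitative meaning of sufficiency is precisely $r<s$.
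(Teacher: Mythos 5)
Your approach is the one the paper takes: $(1)\Rightarrow(2)$ is cited from Bolley--Camus (\cite[Theorem 1.1]{MR548225}), and $(2)\Rightarrow(1)$ is proved by contraposition, running the Fourier-type integral construction underlying Theorem~\ref{MainThm} with $\bM=\bG^s$. So you have the right plan.

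What you present as an anticipated "careful bookkeeping" step is, however, already carried out in the paper and encoded in Theorem~\ref{FiniteVersion}: that theorem permits coefficients of class $\{\bM^\rho\}$ whenever $1<1/\rho<\gamma(\bM)$. Taking $\bM=\bG^s$ one has $\gamma(\bM)=s$, and $\bG^r=\bM^{r/s}$, so $\rho=r/s$ and the requirement becomes $1<s/r<s$, i.e.\ $1<r<s$. Corollary~\ref{GevreyCor} is thus an immediate application. Your heuristic that the Gevrey gap $((dk)!)^{\,s-r}$ "absorbs" the inflation in the coefficient bound points in the right direction, but the precise threshold is the index inequality $1/\rho<\gamma(\bM)$ rather than merely $r<s$; this is exactly why the parameters $\gamma_0,\widetilde\gamma,\gamma'$ in the proof of Theorem~\ref{FiniteVersion} are chosen with the constraint $\rho\gamma<\gamma_0<\widetilde\gamma<\gamma$, and not simply from $\rho<1$.

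There is also a genuine small gap: for $r=1$ the condition $s/r<s$ degenerates to equality, so Theorem~\ref{FiniteVersion} does not apply and your claim that "the technical machinery applies with no change" needs justification. The paper settles $r=1$ separately by invoking M\'etivier's original \cite[Theorem 2.3]{doi:10.1080/03605307808820078} (equivalently, Theorem~\ref{MainThm} with analytic coefficients). A complete write-up along your lines must record this case split.
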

\section{Preliminaries}\label{weight}
\subsection{Weight sequences}
In this section we summarize the facts on weight sequences and the associated spaces which we
will need for the proof of Theorem \ref{MainThm}.
We begin by recalling an auxiliary result from \cite[Section 6D]{Fuerdoes2022}: 

\begin{Lem}[{\cite[Lemma 6.10]{Fuerdoes2022}}]\label{D-AuxLemma1}
	Let $\bM$ be a weight sequence and $\rho, R\geq 1$. Then
	\begin{equation*}
		\rho^jM_{k+l}R^l\leq \rho^{j+l}M_k+M_{j+k+l}R^{j+l}
	\end{equation*}
	for all $j,k,l\in\N_0$.
\end{Lem}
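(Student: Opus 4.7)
The plan is to dichotomize on the size of the telescoping ratio $M_{k+l}/M_k$ relative to $(\rho/R)^l$. Writing everything in terms of $\mu_n = M_n/M_{n-1}$, one has
\begin{equation*}
\frac{M_{k+l}}{M_k} = \mu_{k+1}\mu_{k+2}\cdots\mu_{k+l}, \qquad \frac{M_{j+k+l}}{M_{k+l}} = \mu_{k+l+1}\mu_{k+l+2}\cdots\mu_{j+k+l},
\end{equation*}
and by definition of a weight sequence the sequence $(\mu_n)$ is nondecreasing, which is the only structural fact I expect to use.

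In the first case, suppose $M_{k+l}/M_k \leq (\rho/R)^l$. Multiplying by $\rho^j R^l M_k$ gives $\rho^j M_{k+l} R^l \leq \rho^{j+l} M_k$, so the first term on the right-hand side already absorbs the left-hand side and the second term is a free bonus. This is the easy half.

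In the opposite case, $\mu_{k+1}\cdots\mu_{k+l} > (\rho/R)^l$, the geometric mean of $\mu_{k+1},\ldots,\mu_{k+l}$ exceeds $\rho/R$, so the largest member, which by monotonicity is $\mu_{k+l}$, also satisfies $\mu_{k+l} > \rho/R$. The key point is that every later ratio $\mu_{k+l+i}$, $i=1,\ldots,j$, is then $\geq\mu_{k+l} > \rho/R$ as well, so $M_{j+k+l}/M_{k+l} \geq (\rho/R)^j$, which after multiplication by $R^{j+l}M_{k+l}$ rearranges to $\rho^j M_{k+l}R^l \leq M_{j+k+l}R^{j+l}$, and this time the second term on the right-hand side handles the estimate by itself.

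I expect no real obstacle: the only thing to watch is that the step ``geometric mean $>\rho/R$ implies $\mu_{k+l}>\rho/R$'' really does use monotonicity (not merely log-convexity in some weaker form), and that the boundary cases $j=0$ or $l=0$ reduce the claim to a tautology (since one of the two products above becomes empty and one term on the right already equals or exceeds the left). After these checks the argument is just the two-line case split above.
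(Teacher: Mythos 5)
Your proof is correct and takes essentially the same approach as the paper: both arguments split into two cases according to how $\rho/R$ compares with the ratios $\mu_n$, and both use only the monotonicity of $(\mu_n)$ together with the telescoping identity $M_{k+l}/M_k = \mu_{k+1}\cdots\mu_{k+l}$. The paper phrases the dichotomy directly as $\rho \lessgtr \mu_{k+l}R$, which makes case 1 a short telescoping estimate and case 2 immediate; you phrase the dichotomy as $M_{k+l}/M_k \lessgtr (\rho/R)^l$, which makes case 1 immediate and pushes the small amount of work (the geometric-mean observation) into case 2. These are interchangeable, and your handling of the boundary cases $j=0$ and $l=0$ is sound.
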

\begin{proof}
	For $\rho\geq \mu_{k+l}R$ we obtain that
	\begin{equation*}
		M_{k+l}R^l=M_k\mu_{k+1}R\dots \mu_{k+l}R\leq \rho^{l}M_k
	\end{equation*}
since $\mu_k$ is increasing.
	If $\rho\leq \mu_{k+l}R$ then
	\begin{equation*}
		\rho^j\leq \mu_{k+l+1}R\dots \mu_{k+l+j}R\leq \frac{M_{j+k+l}}{M_{k+l}}R^j.
	\end{equation*}
\end{proof}
We define on the set of all weight sequences the following order relations: Let $\bM$ and $\bN$ be
two weight sequences then 
\begin{itemize}
	\item $\bM\leq A\bN$ for some $A>0$ if $M_k\leq A N_k$ for all $k\in\N_0$.
	\item $\bM\preceq \bN$ if there are constants $C,h>0$ such that $M_k\leq Ch^k N_k$ for all
	$k\in\N_0$.
	\item $\bM\lhd\bN$ if for every $h>0$ there is a constant $C>0$ such that $M_k\leq Ch^kN_k$
	for all $k\in\N_0$.
\end{itemize}
Moreover, we write $\bM\approx\bN$ if $\bM\preceq\bN$ and $\bN\preceq\bM$.
It is clear that $\bM\preceq\bN$ implies that $\DC{\bM}{\Omega}\subseteq\DC{\bN}{\Omega}$ and
$\vDC{\bM}{\Omega}\subseteq\vDC{\bN}{\Omega}$ for any differential operator $P$ of class $[\bM]$.
If $\bM\lhd\bN$ then $\Rou{\bM}{\Omega}\subseteq\Beu{\bN}{\Omega}$ and
$\vRou{\bM}{\Omega}\subseteq\vBeu{\bN}{\Omega}$.

We denote the space of smooth functions with compact support in $\Omega$ by $\D(\Omega)$.
\begin{Def}
	A subspace $E$ of $\E(\Omega)$ is called quasianalytic if $E\cap\D(\Omega)=\{0\}$, i.e.\
	$E$ contains no non-trivial functions with compact support.
\end{Def}
In the case of Denjoy-Carleman classes the Denjoy-Carleman theorem characterizes the quasianalyticity of the spaces, see e.g.~\cite{MR1996773} or \cite{Komatsu73}:
\begin{Thm}
	Let $\bM$ be a weight sequence. Then the class $\DC{\bM}{\Omega}$ is not quasianalytic 
	if and only if 
	\begin{equation}\label{NonQuasi}
		\sum_{k=0}^\infty\frac{M_k}{M_{k+1}}<\infty.
	\end{equation}
\end{Thm}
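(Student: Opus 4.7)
The plan is to prove the two implications by distinct methods: an explicit infinite-convolution construction for sufficiency, and a complex-analytic uniqueness theorem for necessity.

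\textbf{Sufficiency.} Assume $\sum_k M_k/M_{k+1} < \infty$. I would construct a nontrivial element of $\DC{\bM}{\R} \cap \D(\R)$ and extend to $\R^n$ by tensor products. Fix $h > 1$, set $\lambda_k = M_{k-1}/(h M_k)$ (summable by hypothesis, and with $\lambda_1 \cdots \lambda_k = h^{-k}/M_k$), and let $\chi_k = \chi_{[-\lambda_k, \lambda_k]}/(2\lambda_k)$ be the uniform probability density on $[-\lambda_k, \lambda_k]$. Define the infinite convolution
\begin{equation*}
    f = \chi_1 * \chi_2 * \chi_3 * \cdots.
\end{equation*}
Summability of $(\lambda_k)$ makes the partial products converge in $L^1$ to a nontrivial ($\int f = 1$) function compactly supported in an interval of length $2\sum\lambda_k$. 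The derivative bound $|f^{(k)}(x)| \leq C h^k M_k$ follows from the Fourier representation $\hat f(\xi) = \prod_k \mathrm{sinc}(\lambda_k \xi)$ combined with the elementary estimate $|\mathrm{sinc}(x)| \leq \min(1, 1/|x|)$. The Beurling case is handled by first passing to a sequence $\bN \lhd \bM$ still satisfying $\sum N_k/N_{k+1} < \infty$ (which exists by a standard diagonal argument on $(\mu_k)$) and carrying out the same construction, since $\Rou{\bN}{\R} \subseteq \Beu{\bM}{\R}$.

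\textbf{Necessity.} Assume $\sum_k M_k/M_{k+1} = \infty$ and let $f \in \DC{\bM}{\Omega} \cap \D(\Omega)$; we must show $f \equiv 0$. I would first reduce to one variable: pick a boundary point $x_0 \in \partial(\supp f)$ and a line $\ell$ through $x_0$ meeting the interior of $\supp f$. The restriction $g = f|_\ell$ is then a nontrivial compactly supported smooth function satisfying the same $[\bM]$ estimates (since directional derivatives are controlled by full derivatives), with $g^{(k)}(x_0) = 0$ for all $k$. It remains to prove the classical one-dimensional Denjoy-Carleman uniqueness theorem: under $\sum M_k/M_{k+1} = \infty$, no such $g$ can exist. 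My route is to extend $g$ by zero to $\R$ and consider its Fourier transform; Cauchy-type estimates show $\hat g$ extends to an entire function whose growth on the imaginary axis is controlled by the associated function $T(r) = \sup_k r^k/M_k$. The standard dictionary converts $\sum M_k/M_{k+1} = \infty$ into the Carleman integral $\int_1^\infty r^{-2} \log T(r)\, dr = \infty$, and Carleman's uniqueness theorem for functions holomorphic in a half-plane then forces $\hat g \equiv 0$, hence $g \equiv 0$.

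The main obstacle is the necessity direction, which rests on Carleman's uniqueness theorem for holomorphic functions in a half-plane, a nontrivial piece of complex analysis in the spirit of Phragm\'en-Lindel\"of and Nevanlinna theory, together with the standard but technical dictionary between the combinatorial divergence $\sum M_k/M_{k+1} = \infty$ and the integral divergence of $\log T(r)/r^2$. The sufficiency direction, by contrast, is essentially explicit bookkeeping once the convolution is set up with the clean choice $\lambda_k \sim M_{k-1}/M_k$.
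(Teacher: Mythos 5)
The paper does not prove this statement; it cites it as the classical Denjoy--Carleman theorem (referring to Komatsu and Rudin). Your proposal follows one of the standard textbook routes (Mandelbrojt/H\"ormander/Koosis): an infinite-convolution construction for sufficiency and a Fourier--Carleman uniqueness argument for necessity. In broad outline this is sound, but the sufficiency step has a genuine index slip.

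With $\lambda_k = M_{k-1}/(hM_k)$ and $f = \chi_1*\chi_2*\cdots$, the bound $|f^{(k)}| \le Ch^kM_k$ does \emph{not} follow from $\hat f(\xi) = \prod_j\operatorname{sinc}(\lambda_j\xi)$ together with $|\operatorname{sinc}| \le \min(1,1/|\cdot|)$. That estimate only yields $\sup_\xi|\xi^k\hat f(\xi)| \le h^kM_k$; to bound $f^{(k)}$ in $L^\infty$ one must integrate $|\xi^k\hat f(\xi)|$ in $\xi$, and the extra $\operatorname{sinc}$ factors needed for integrability push the index up. Equivalently, the total-variation convolution argument gives $\|f^{(k)}\|_{L^1} \le h^kM_k$, and then passing to the sup norm via $\|f^{(k)}\|_\infty \le \|f^{(k+1)}\|_{L^1} \le h^{k+1}M_{k+1}$ produces an unwanted $M_{k+1}$. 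For a general weight sequence (not assumed to satisfy \eqref{DerivClosed}), $M_{k+1}/M_k$ can grow faster than any geometric sequence, so this loss is not absorbable. The classical fix is to include one extra box of fixed width: set $f = \chi_0*\chi_1*\chi_2*\cdots$ with $\chi_0$ a fixed probability box. Then
$D^kf = \chi_0*(D\chi_1)*\cdots*(D\chi_k)*\chi_{k+1}*\cdots$ and
\begin{equation*}
\|D^kf\|_\infty \le \|\chi_0\|_\infty\prod_{j=1}^{k}\|D\chi_j\|_{TV} = C\,h^kM_k,
\end{equation*}
which is exactly what you want (and rescaling then replaces $h$ by an arbitrarily small constant). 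You should incorporate this extra factor; as written the derivative bound is not established.

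The necessity direction is essentially correct, modulo cosmetics. Two small points: (i) the reduction to one variable should pick a point where $f\neq 0$ and restrict to any line through it --- a line through a boundary point of $\supp f$ meeting the interior of $\supp f$ need not carry a nontrivial restriction of $f$; (ii) the Fourier--Carleman route you then describe does not need $g$ to vanish to infinite order at an endpoint (that would be needed for the alternative Bang/Ostrowski-type argument), so that clause is a red herring. The passage from $\sum M_k/M_{k+1}=\infty$ to the divergence of $\int_1^\infty \omega_\bM(r)/r^2\,dr$ is indeed the standard dictionary for log-convex sequences, and the Phragm\'en--Lindel\"of/Carleman uniqueness theorem for bounded holomorphic functions in a half-plane (applied to $\hat g(\zeta)e^{iA\zeta}$ for $A$ exceeding the support radius) then closes the argument. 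The diagonal passage to $\bN\lhd\bM$ for the Beurling case is a standard construction and fine as stated.
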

We say that a weight sequence $\bM$ is non-quasianalytic if \eqref{NonQuasi} is satisfied and
quasianalytic otherwise.
Clearly \eqref{StrongQuasi} implies \eqref{NonQuasi}.
\subsection{Associated weights}
\begin{Def}
	If $\bM$ is a weight sequence then 
	\begin{align*}
		\omega_\bM(t)&=\sup_{k\in\N_0}\log \frac{t^k}{M_k},\quad t>0,& \omega_\bM(0)&=0,\\
	\shortintertext{is the weight function associated to $\bM$.
The function }
	h_\bM(t)&=\inf_{k\in\N_0}t^kM_k,\qquad t>0,& h_\bM(0)&=0,
	\end{align*}
is the weight associated to $\bM$.
\end{Def}
It is clear that $\omega_\bM$ and $h_\bM$ are well-defined, continuous and increasing functions on 
the right half-line $[0,\infty)$ due to \eqref{logconvexity} and \eqref{Limit2}.
 Moreover it is easy to see that
\begin{equation}
h_\bM\left(t^{-1}\right)=e^{-\omega_\bM(t)}
\end{equation}
for $t>0$.
We can recover the weight sequence from its associated weight function:
\begin{equation}\label{InversionWeight}
	M_k=\sup_{t\geq0}\frac{t^k}{\exp (\omega_\bM(t))},\qquad k\in\N_0.
\end{equation}

To any pair $(\bM,\bN)$ of weight sequences we can define the pointwise product $\bM\bN=\bM\cdot\bN$
by 
\begin{equation*}
	(MN)_k=M_kN_k,\qquad k\in\N_0.
\end{equation*}
If $\tau>0$ then we define similarly the power $\bM^\tau$ of the weight sequence
$\bM$ by $M_k^\tau=(M_k)^\tau$.
Both $\bM\bN$ and $\bM^\tau$ are again weight sequences.
We will need the following result.
\begin{Lem}\label{auxlemma}
Let $\mathbf{T}$ and $\mathbf{U}$ be two weight sequences and $\tau>1$. 
Then the following two assertions
are equivalent:
	\begin{enumerate}
		\item There is a constant $A\geq 1$ such that
		\begin{equation*}
			\mathbf{U}\leq A\bT^\tau.
		\end{equation*}
	\item There is a constant $C\geq 1$ such that
	\begin{equation*}
		\omega_{\mathbf{T}}(s)\leq \tau^{-1}\omega_{\mathbf{U}}\bigl(s^\tau\bigr)+C
	\end{equation*}
for all $s\geq 0$.
\end{enumerate}
If one of the above conditions holds for $\mathbf{T}$, $\mathbf{U}$ and $\tau$ then for all
$0<a<1$, and $\sigma>\tau$ there exists a constant $C\geq 1$ such that
\begin{equation}\label{AuxEstimate3}
	\omega_{\mathbf{T}}(s)\leq \tau^{-1}\omega_{\mathbf{U}}\bigl(as^\sigma\bigr)+C
\end{equation}
for all $s\geq 0$.
\end{Lem}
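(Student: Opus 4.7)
The plan is to treat this as a Legendre-transform style correspondence, using the defining formula $\omega_{\mathbf{M}}(t)=\sup_k\log(t^k/M_k)$ and the inversion formula \eqref{InversionWeight} to pass back and forth between the weight sequence and its associated weight function. Neither implication should be deep; the only place requiring real care is the strengthening \eqref{AuxEstimate3}.

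For $(1)\Rightarrow(2)$, I would start from $U_k\leq AT_k^\tau$, equivalently $T_k\geq (U_k/A)^{1/\tau}$. Taking logarithms and dividing by $\tau$ gives
\begin{equation*}
\log\frac{s^k}{T_k}\;\leq\;\log s^k-\tau^{-1}\log U_k+\tau^{-1}\log A\;=\;\tau^{-1}\log\frac{(s^\tau)^k}{U_k}+\tau^{-1}\log A,
\end{equation*}
and taking the supremum over $k\in\N_0$ immediately yields (2) with $C=\tau^{-1}\log A$.

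For the converse $(2)\Rightarrow(1)$, I would invoke the inversion formula \eqref{InversionWeight}. Writing $U_k=\sup_{t\geq 0}t^k/\exp(\omega_{\mathbf{U}}(t))$ and substituting $t=s^\tau$, one obtains $U_k=\sup_{s\geq 0}s^{\tau k}/\exp(\omega_{\mathbf{U}}(s^\tau))$. Applying (2) in the form $\omega_{\mathbf{U}}(s^\tau)\geq \tau\omega_{\mathbf{T}}(s)-\tau C$ gives
\begin{equation*}
\frac{s^{\tau k}}{\exp(\omega_{\mathbf{U}}(s^\tau))}\;\leq\;e^{\tau C}\!\left(\frac{s^k}{\exp(\omega_{\mathbf{T}}(s))}\right)^{\!\tau},
\end{equation*}
and another use of \eqref{InversionWeight} for $\mathbf{T}$ yields $U_k\leq e^{\tau C}T_k^\tau$, i.e.\ (1) with $A=e^{\tau C}$.

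The harder (though still short) step is \eqref{AuxEstimate3}. Given the base estimate $\omega_{\mathbf{T}}(s)\leq \tau^{-1}\omega_{\mathbf{U}}(s^\tau)+C_0$, I would split according to the size of $s$. Since $\sigma>\tau$, the threshold $s_0:=a^{-1/(\sigma-\tau)}$ is such that $as^\sigma\geq s^\tau$ for $s\geq s_0$; monotonicity of $\omega_{\mathbf{U}}$ then gives $\omega_{\mathbf{U}}(s^\tau)\leq\omega_{\mathbf{U}}(as^\sigma)$ and hence the required bound for $s\geq s_0$ with the same constant $C_0$. For $0\leq s\leq s_0$ the left hand side $\omega_{\mathbf{T}}(s)$ is bounded by the finite number $\omega_{\mathbf{T}}(s_0)$, while the right hand side is non-negative, so enlarging the constant to $C=\max\{C_0,\omega_{\mathbf{T}}(s_0)\}$ takes care of the small-$s$ range. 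The main obstacle, such as it is, lies precisely in this case split: one must notice that no assumption on $\mathbf{U}$ near $0$ is needed because $\omega_{\mathbf{T}}$ is itself bounded on any compact set.
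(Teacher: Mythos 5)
Your proof is correct and takes essentially the same route as the paper: both directions of the equivalence come down to the inversion/sup formula for associated weight functions, and the strengthening \eqref{AuxEstimate3} is handled by the same observation that $s^\tau\le as^\sigma$ for $s$ large plus monotonicity of $\omega_{\mathbf{U}}$ and enlarging the constant. The only cosmetic difference is that for $(1)\Rightarrow(2)$ you manipulate $\log(s^k/T_k)$ directly, whereas the paper routes through the identity $\omega_{\bM^a}(t)=a\,\omega_{\bM}(t^{1/a})$; the two are the same calculation in different notation.
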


\begin{proof}
We begin by assuming (2). Since both $\mathbf{T}$ and $\mathbf{U}$ are weight sequences we can apply \eqref{InversionWeight} and obtain for all $k\in\mathbb{N}_0$ that
\begin{align*}
	T_k&=\sup_{s\ge 0}\frac{s^k}{\exp(\omega_{\mathbf{T}}(s))}\ge\frac{1}{e^C}
	\sup_{s\ge 0}\frac{s^k}{\exp(\tau^{-1}\omega_{\mathbf{U}}(s^{\tau}))}
	=\frac{1}{e^C}\sup_{t\ge 0}\frac{t^{k/\tau}}{\exp(\tau^{-1}\omega_{\mathbf{U}}(t))}
	\\&
	=\frac{1}{e^C}\left(\sup_{t\ge 0}\frac{t^{k}}{\exp(\omega_{\mathbf{U}}(t))}\right)^{1/\tau}
	=\frac{1}{e^C}(U_k)^{1/\tau},
\end{align*}
thus
\begin{equation*}
	U_k\le e^{C\tau}(T_k)^{\tau}
\end{equation*}
for all $k\in\N_0$ and (1) is verified with $A:=e^{C\tau}$.

If (1) holds then  let us first recall the following immediate consequence of taking any power $a>0$ of a given weight sequence $\bM$:
\begin{equation*}
	 \omega_{\bM^a}(t)=\sup_{k\in\N_0}\log\left(\frac{t^{k/a}}{M_k}\right)^a
	=a\omega_{\bM}\left(t^{1/a}\right)
\end{equation*}
for all $t\geq 0$.
By assumption we have $t^k/(T_k)^{\tau}\le At^k/U_k$ for all $t\ge 0$ and
 $k\in\N_0$ and thus by the above and the definition of associated weight functions 
 we get for all $t\ge 0$ that
 \begin{equation*}
 	\tau\omega_{\bT}(t^{1/\tau})
 	=\omega_{\bT^{\tau}}(t)\le\omega_{\mathbf{U}}(t)+\log(A).
 \end{equation*}
Thus (2) is verified with the same $\tau$ and $C:=\log A/\tau$.

In order to prove the last statement assume that (2) holds and note that 
for all $\sigma>\tau$ we have $s^{\tau}\le as^{\sigma}$ for all $s$ sufficiently large 
(depending on given $a<1$ and $1<\tau<\sigma$). 
Thus \eqref{AuxEstimate3} is verified if we sufficiently enlarge the constant $C$.
\end{proof}
\subsection{Asymptotic expansions}
Let $\RR$ be the Riemann surface of the logarithm and $\C[[z]]$ be  the space
of formal power series  with complex coefficients.
If $S$ is a sector of $\RR$ we say that a holomorphic function $g\in\hol(S)$ admits 
$\hat{g}=\sum_j a_jz^j\in\C[[z]]$
as its uniform $\{\bM\}$-asymptotic expansion if there are constants $C,A>0$ such that
for all $k\in\N_0$ we have
\begin{equation*}
	\abs*{f(z)-\sum_{j=0}^{k-1}a_jz^j}<CA^kM_k\abs{z}^k,\quad z\in S.
\end{equation*}
We denote the space of functions in $S$ admitting an uniform $\{\bM\}$-asymptotic expansion by
$\fhRou{\bM}{S}$.

We are here mainly interested in holomorphic functions defined on unbounded sectors bisected by the 
real line, i.e.~sectors of the form
\begin{equation*}
	S_\gamma=\Set*{z\in\RR\given \abs{\arg z}<\frac{\gamma\pi}{2}}
\end{equation*}
with opening $\gamma\pi$ where $\gamma>0$.

\begin{Def}[{\cite[Definition 3.3]{JimenezGarrido2022a}}]
	Let $\bM$ be a weight sequence and $S$ be an unbounded sector in $\RR$ bisected by the interval $(0,\infty)$. A holomorphic function $G\in\hol(S)$ is called an optimal $\{\bM\}$-flat
	function in $S$ if 
	\begin{enumerate}
		\item There are constants $A_1$ and $B_1$ such that
		\begin{equation}\label{OptimalEst1}
			A_1h_\bM\left(B_1t\right)\leq G(t)
		\end{equation}
	for all $t>0$.
		\item There exist $A_2,B_2>0$ such that
		\begin{equation}\label{OptimalEst2}
		\abs{G(z)}\leq A_2h_\bM\bigl(B_2\abs{z}\bigr)
		\end{equation}
	for all $z\in S$.
	\end{enumerate}
\end{Def}
If $G$ is an optimal $\{\bM\}$-flat function in a sector $S$ 
then condition (i) in particular gives that $G(t)>0$ for $t>0$.
Moreover, we derive from  \eqref{OptimalEst2} that
\begin{equation*}
	\abs*{G(z)}\leq A_2B_2^k M_k\abs{z}^k, \qquad k\in\N_0,\; z\in S,
\end{equation*}
which implies that $G\in\fhRou{\bM}{S}$ with the null series being the uniform
 asymptotic expansion of $G$
in $S$. On the other hand, the estimate \eqref{OptimalEst1} gives that the rate of decrease
on the real line to $0$ is 
precisely controlled by the sequence $\bM$. This motivates the terminology.

If $G$ is an optimal $\{\bM\}$-flat function in $S$ then we introduce the kernel function
$\Phi=\Phi_\bM:\, S\rightarrow \C$ defined by
\begin{equation*}
	\Phi(z)=G\left(\frac{1}{z}\right).
\end{equation*}
Thus we have the following inequalities:
\begin{gather}\notag 
A_1 e^{-\omega_\bM(t/B_1)}=A_1h_\bM\left(\frac{B_1}{t}\right)
\leq \Phi(t)\\
\shortintertext{for t>0 and } \label{Kernelestimates2}
\abs*{\Phi(z)}\leq A_2h_\bM\left(\frac{B_2}{\abs{z}}\right)
=A_2e^{-\omega_\bM(\abs{z}/B_2)}
\end{gather}
for $z\in S$.
The following proposition is essential for the proof of Theorem \ref{MainThm}.
\begin{Prop}[cf.~{\cite[Proposition 3.11]{JimenezGarrido2022a}}]\label{SanzProp}
	Let $\bM$ be a weight sequence and $G\in\fhRou{\bM}{S}$ be an optimal
	$\{\bM\}$-flat function in a sector $S\subseteq\RR$, which is bisected by the real line
	$(0,\infty)$. If  we put $\Phi(z)=G(1/z)$ then there is a constant $Q_1>0$ such that 
	\begin{equation*}
		Q_1^{k+1}M_k\leq \int_0^\infty\! t^k\Phi(t)\,dt
	\end{equation*}
for all $k\in\N_0$.

Moreover, if $\bM$ satisfies \eqref{DerivClosed} then there exists some $Q_2>0$ such that
\begin{equation*}
	\int_0^\infty\! t^k\Phi(t)\,dt\leq Q^{k+1}_2M_k
\end{equation*}
for all $k\in\N_0$.
\end{Prop}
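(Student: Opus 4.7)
The plan is to reduce both inequalities to estimates on the integral $I_k := \int_0^\infty s^k e^{-\omega_\bM(s)}\,ds$ via the pointwise bounds \eqref{Kernelestimates2} and the substitutions $s = t/B_1$ (lower bound) and $s = t/B_2$ (upper bound). The structural fact driving the estimates on $I_k$ is \eqref{InversionWeight}: we have $s^k e^{-\omega_\bM(s)} \leq M_k$ for all $s \geq 0$, with equality exactly on the plateau $[\mu_k,\mu_{k+1}]$, on which $\omega_\bM(s) = k\log s - \log M_k$.

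For the lower bound I would estimate $I_k$ from below by restricting to $[\mu_{k+1}/2,\mu_{k+1}]$: since $\omega_\bM$ is increasing, $e^{-\omega_\bM(s)} \geq e^{-\omega_\bM(\mu_{k+1})} = M_k/\mu_{k+1}^k$ on this interval, and an explicit computation gives
\begin{equation*}
I_k \;\geq\; \frac{M_k\,\mu_{k+1}(1 - 2^{-k-1})}{k+1} \;\geq\; \frac{M_k}{2(k+1)},
\end{equation*}
using $\mu_{k+1}\geq \mu_1\geq 1$. The lower half of \eqref{Kernelestimates2} then yields $\int_0^\infty t^k \Phi(t)\,dt \geq A_1 B_1^{k+1} M_k/(2(k+1))$. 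The elementary inequality $B_1^{k+1}/(k+1) \geq (B_1/e)^{k+1}$ (which follows from $e^{k+1}\geq k+1$) absorbs the polynomial factor into a mildly smaller geometric rate, producing the required $Q_1^{k+1}M_k$ for any $Q_1 \leq (B_1/e)\min(1,A_1/2)$.

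For the upper bound (assuming \eqref{DerivClosed}, which provides $A_0,H_0\geq 1$ with $\mu_k \leq A_0 H_0^k$) I would split $I_k = \int_0^{\mu_{k+1}} + \int_{\mu_{k+1}}^\infty$. On the first piece, \eqref{InversionWeight} gives the pointwise bound $M_k$, hence $\int_0^{\mu_{k+1}} \leq M_k\mu_{k+1} \leq A_0 H_0^{k+1}M_k$. On the tail, the $j=k+2$ term in the infimum defining $e^{-\omega_\bM}$ yields $e^{-\omega_\bM(s)}\leq M_{k+2}/s^{k+2}$, so
\begin{equation*}
\int_{\mu_{k+1}}^\infty s^k e^{-\omega_\bM(s)}\,ds \;\leq\; \frac{M_{k+2}}{\mu_{k+1}} \;\leq\; A_0^2 H_0^{2k+3}\,M_k
\end{equation*}
(using $M_{k+2}/M_k = \mu_{k+1}\mu_{k+2}$ and $\mu_{k+1}\geq 1$). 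Combining and applying the upper half of \eqref{Kernelestimates2} yields the required $Q_2^{k+1}M_k$.

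The main obstacle is the lower bound: the plateau-only estimate $I_k\geq M_k(\mu_{k+1}-\mu_k)$ collapses whenever consecutive $\mu_k$ cluster, a situation which the weight-sequence axioms do not exclude. Replacing the plateau by the monotone half-interval $[\mu_{k+1}/2,\mu_{k+1}]$ repairs this at the cost of a $1/(k+1)$ factor, which is then absorbed by the $B_1^{k+1}$ arising from rescaling. For the upper bound, the essential use of \eqref{DerivClosed} occurs in controlling the tail, where a geometric bound on $M_{k+2}/M_k$ is required; without it the tail need not be comparable to $M_k$ in any uniform way.
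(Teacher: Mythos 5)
Your proof is correct. The paper itself gives no argument for Proposition~\ref{SanzProp}: it states only that the result ``follows directly from the proof of \cite[Proposition 3.11]{JimenezGarrido2022a}''. Your write-up is therefore a self-contained derivation, and it is the natural elementary one: reduce both estimates via \eqref{Kernelestimates2} and the substitutions $s=t/B_i$ to the moment integrals $I_k=\int_0^\infty s^k e^{-\omega_\bM(s)}\,ds$, then exploit the piecewise-log-linear structure of $\omega_\bM$. The one genuinely non-routine point is the lower bound, where the naive restriction to the plateau $[\mu_k,\mu_{k+1}]$ can degenerate; your replacement by the monotone half-interval $[\mu_{k+1}/2,\mu_{k+1}]$, combined with $\omega_\bM(\mu_{k+1})=\log(\mu_{k+1}^k/M_k)$ and $e^{k+1}\geq k+1$ to absorb the factor $1/(k+1)$ into the geometric rate, is exactly the right fix, and each step (including the choice $Q_1\leq (B_1/e)\min\{1,A_1/2\}$) checks out. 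The upper bound also checks out: the split at $\mu_{k+1}$, the trivial pointwise bound $s^k e^{-\omega_\bM(s)}\leq M_k$ on the first piece, and the $j=k+2$ choice in the infimum for the convergent tail are all correct, and \eqref{DerivClosed} enters exactly where it must, namely in converting $\mu_{k+1}$ and $\mu_{k+2}$ into geometric factors. A small cosmetic remark: in the tail you can use the cancellation $M_{k+2}/\mu_{k+1}=M_k\mu_{k+2}\leq A_0H_0^{k+2}M_k$ directly, which is slightly sharper than your bound $A_0^2H_0^{2k+3}M_k$, though of course both are of the required geometric form.
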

Proposition \ref{SanzProp} follows directly from the proof of \cite[Proposition 3.11]{JimenezGarrido2022a}.

Finally we need to discuss conditions on the weight sequence $\bM$ which ensure the 
existence of optimal $\{\bM\}$-flat functions.
It turns out to be useful to consider a growth index associated to weight sequences, which
was introduced by Thilliez \cite{Thilliez2003} while investigating the surjectivity of the asymptotic Borel map.
We will use here the characterization given in \cite{JimenezGarrido2019}.
 We recall that a sequence $\bL=(L_k)_k$ is almost increasing if there is some constant $C>0$ such that
\begin{equation*}
	L_j\leq CL_k 
\end{equation*}
for all $j\leq k$.
\begin{Def}
	If $\bM$ is a weight sequence then we define the index $\gamma(\bM)$ of $\bM$ by
	\begin{equation*}
		\gamma(\bM)=\sup\Set*{\gamma>0\given\text{The sequence }
			\biggl(\frac{\mu_k}{k^\gamma}\biggr)_k
 			\text{is almost increasing}}\in [0,\infty].
	\end{equation*}
\end{Def}
It follows directly from the definition that
$\gamma(\bG^s)=s$ for $s\geq 1$ and $\gamma(\bN^{q,r})=\infty$ for all $q,r>1$.
More generally, it is easy to see that if $\bM$ is a weight sequence and $\tau>0$ then
\begin{equation*}
	\gamma\bigl(\bM^\tau\bigr)=\tau\gamma(\bM).
\end{equation*}
We may also note that a weight sequence $\bM$ is strongly non-quasianalytic if and only if
$\gamma(\bM)>1$, see \cite{JimenezGarrido2019}.
Finally, according to \cite[Proposition 3.10]{JimenezGarrido2022a} the following statement is true: If $\bM$ is a weight sequence such that
$\gamma(\bM)>0$ then for each $0<\gamma<\gamma(\bM)$ there exists an optimal $\{\bM\}$-flat function $G$ in the sector $S_\gamma$.

\section{Proof of main theorems}\label{proofs}
In order to prove Theorem \ref{MainThm} we will try to adapt the pattern of the proof of  
\cite[Theorem 2.3]{doi:10.1080/03605307808820078}.
So let 
\begin{equation*}
	P=P(x,D)=\sum_{\alp\leq d} p_\alpha(x)D^\alpha, \quad p_\alpha\in\E(\Omega),
\end{equation*}
be a differential operator of order $d$ 
with smooth coefficients in an open set $\Omega\subseteq\R^n$.
The symbol of $P$ is denoted by
\begin{align*}
	p(x,\xi)&=\sum_{\alp\leq d} p_\alpha(x)\xi^\alpha,\qquad x\in\Omega,\;\xi\in\R^n\!\setminus\!\{0\},\\
	\shortintertext{whereas}
	p_d(x,\xi)&=\sum_{\alp= d} p_\alpha(x)\xi^\alpha,\qquad x\in\Omega,\;\xi\in\R^n\!\setminus\!\{0\},
\end{align*}
is the principal symbol of $P$. 
If we suppose that $P$ is not elliptic then there has to be a point $x_0\in\Omega$ and
a unit vector $\xi_0\in S^{n-1}=\Set{\xi\in\R^n\given \xit=1}$ such that $p_d(x_0,\xi_0)=0$.
Let $\delta>0$ be such that $B_0=\Set{x\in\R^n\given \abs{x-x_0}\leq   2\delta}\subseteq\Omega$.
Then we have the following Lemma.
 \begin{Lem}
		There is a constant $D>0$ such that for all $t>1$, $0<\eps<1$ and all $x\in\Omega$ with 
		$\abs{x-x_0}\leq 2\delta t^{-\eps}$ we have the estimate
		\begin{equation}\label{Condition2}
			\abs*{p(x, t\xi_0)}\leq Dt^{d-\eps}.
		\end{equation}
	\end{Lem}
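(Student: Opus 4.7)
The plan is to split the symbol into its principal part and the lower order terms and estimate each separately, exploiting the vanishing $p_d(x_0,\xi_0)=0$ for the principal part and a brute size bound for the rest. Since $\xi_0$ is a unit vector, plugging in $\xi=t\xi_0$ gives
\begin{equation*}
  p(x,t\xi_0) = t^d p_d(x,\xi_0) + \sum_{\alp<d} p_\alpha(x)\,\xi_0^\alpha\, t^\alp .
\end{equation*}

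For the lower order terms, the coefficients $p_\alpha$ are smooth on the compact set $B_0$, hence bounded there by some constant, and $|\xi_0^\alpha|\leq 1$. Summing finitely many terms, I get a uniform bound
\begin{equation*}
  \Bigl|\sum_{\alp<d} p_\alpha(x)\xi_0^\alpha t^\alp\Bigr| \leq C_1\, t^{d-1}\qquad \text{for } x\in B_0,\; t\geq 1.
\end{equation*}
For the principal part, the function $x\mapsto p_d(x,\xi_0)$ is smooth on $B_0$ and vanishes at $x_0$, so by the mean value inequality there is $L>0$ with $|p_d(x,\xi_0)|\leq L|x-x_0|$ for every $x\in B_0$. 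Under the hypothesis $|x-x_0|\leq 2\delta t^{-\eps}$ this gives
\begin{equation*}
  t^d |p_d(x,\xi_0)| \leq L t^d \cdot 2\delta t^{-\eps} = 2L\delta\, t^{d-\eps}.
\end{equation*}

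Combining the two estimates yields $|p(x,t\xi_0)|\leq 2L\delta\, t^{d-\eps}+C_1 t^{d-1}$, and since $0<\eps<1$ and $t>1$ we have $t^{d-1}\leq t^{d-\eps}$, so both terms are absorbed into $D\, t^{d-\eps}$ with $D:=2L\delta+C_1$ (independent of $x$, $t$, and $\eps$). The only point that requires a little care is to make sure the constant $D$ does not depend on $\eps$; this is automatic because the bound $t^{d-1}\le t^{d-\eps}$ holds uniformly for all $\eps\in(0,1)$ and $t>1$, and the Taylor/mean value argument for $p_d$ uses only that $B_0$ is a fixed compact neighborhood of $x_0$. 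There is no essential obstacle here; the lemma is really just a quantification of the fact that $p$ loses a power of $t$ in its principal part as $x$ approaches $x_0$ at the prescribed rate $t^{-\eps}$.
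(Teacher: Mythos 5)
Your proof is correct and follows essentially the same route as the paper: decompose $p$ into the principal part and lower-order homogeneous pieces, bound the lower-order terms by $t^{d-1}\leq t^{d-\eps}$, and exploit $p_d(x_0,\xi_0)=0$ together with $|x-x_0|\leq 2\delta t^{-\eps}$ to gain the factor $t^{-\eps}$ in the principal part. The only cosmetic difference is that the paper quantifies the vanishing of $p_d(\cdot,\xi_0)$ at $x_0$ via a Hadamard-type decomposition $p_d(x,\xi_0)=\sum_k (x-x_0)^{e_k}g_k(x)$ with continuous $g_k$, while you use the mean value inequality; the two devices are interchangeable here since $p_d(\cdot,\xi_0)$ is $C^1$ on the fixed compact ball $B_0$.
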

\begin{proof}
	Since $t>1$ and $\eps>0$ we have that
	\begin{equation*}
		B_t=\Set*{x\in\R^n\given \abs{x-x_0}\leq 2\delta t^{-\eps}}\subseteq B_0.
	\end{equation*}
We are going to write
\begin{equation*}
	p_j(x,\xi)=\sum_{\alp=j}p_\alpha(x)\xi^\alpha
\end{equation*}
for the homogeneous part of order $j\in\Set{0,\dotsc,d-1}$ of $p$ and estimate
\begin{equation*}
	\abs*{p_j(x,t\xi_0)}\leq t^{j}\sup_{x\in B_0}\abs*{p_j(x,\xi_0)}
	\leq t^{d-\eps}\sup_{\substack{x\in B_0\\ j=0,1,\dotsc,d-1}}\abs*{p_j(x,\xi_0)}
\end{equation*}
for $x\in B_t$, $t>1$ and $0<\eps<1$. In the case of the principal symbol $p_d$ we have that
\begin{equation*}
	p_d(x,t\xi_0)=t^dp_d(x,\xi_0).
\end{equation*}
Since $p_d(x_0,\xi_0)=0$ it follows from \cite[Lemma 1.2]{Bierstone1980} 
that there are continuous functions $g_k$, $k=1,\dotsc,n$, in $B_0$ such that
\begin{equation*}
	p_d(x,\xi_0)=\sum_{k=1}^n (x-x_0)^{e_k}g_k(x)
\end{equation*}
where $e_k$ denotes the $k$-th unit vector.
If $x\in B_t$, $t>1$ and $0<\eps<1$ we can thus estimate
\begin{equation*}
	\begin{split}
		\abs*{p_d(x,t\xi_0)}&\leq t^d \abs{x-x_0} \sum_{k=1}^n\abs*{g_k(x)}\\
		&\leq 2\delta t^{d-\eps}n\sup_{\substack{x\in B_0\\ k=1,\dotsc,n}}\abs*{g_k(x)}.
	\end{split}
\end{equation*}
\end{proof}
After this technical observation we begin with the construction of the $\{\bM\}$-vector $u$
for a given weight sequence $\bM$.
We may suppose that there is a non-quasianalytic weight sequence $\bL$ such that
$\bL\preceq \bM$.
Then there exists $\psi\in\Rou{\bL}{\R^n}$ such that 
\begin{equation}\label{TestFct}
	\psi(x)=1\quad \text{for}\; \abs{x}<\delta\qquad \text{\&}\qquad \psi(x)=0\quad 
	\text{for}\; \abs{x}>2\delta.
\end{equation} 
We  define the function $u$ by
\begin{equation}\label{FctDef}
	u(x)=\int_1^\infty\negthickspace \psi\left(t^\eps(x-x_0)\right) \Phi_\bN(t)e^{it\xi_0(x-x_0)}\,dt,\qquad 
	x\in\Omega,
\end{equation}
where $0<\eps<1$ is a parameter
to be specified and $\Phi_\bN(t)=G_\bN(1/t)$ with $G_\bN$ being an optimal $\{\bN\}$-flat function
for some weight sequence $\bN$.
Obviously $u\in\E(\Omega)$ with $\supp u\subseteq B_{0}$ and we observe that if $D_{\xi_0}=-i\partial_{\xi_0}$ then 
\begin{equation*}
	D_{\xi_0}^k u(x_0)=\int_{1}^{\infty}\negthickspace t^{k}\Phi_\bN(t)\,dt.
\end{equation*}
Since $h_\bN(s)\leq 1$ for all $s>0$ we have that
\begin{equation*}
	\begin{split}
		0<	\int_0^1\!t^k \Phi_\bN(t)\,dt\leq C\int_0^1\!t^k\,dt= \frac{C}{k+1}
		\xrightarrow{\;k\rightarrow \infty\;} 0
	\end{split}
\end{equation*}
for some constant $C>0$.
Hence using Proposition \ref{SanzProp} we can conclude that there is a constant $Q_1>0$ such that
\begin{equation}\label{OptimalEstimate}
	Q_1^{k+1} N_k-\frac{C}{k+1}\leq	\abs*{D_{\xi_0}^ku(x_0)} 
\end{equation}
for all $k\in\N_0$.
It follows that 
 $u$ cannot be of class $\{\bM\}$ in any neighborhood
of $x_0$ if $\bM\precnapprox\bN$.

In order to estimate $P^ku$ we need to assume some a-priori regularity on the coefficients
of $P$. Therefore we will suppose that $p_\alpha\in \Rou{\bL}{\Omega}$ for all $\alpha$, 
which means that there is a constant $C_P>0$ such that
for all $\nu\in\N_0^n$, all $\alpha\in\N_0^n$ with $\alp\leq d$, all $x\in B_0$ and every $t\geq 1$
we have that 
\begin{equation}\label{Condition1}
		\abs*{D^\nu_x \partial_\xi^\alpha p\bigl(x,t\xi_0\bigr)}
		\leq C_P^{\nut+1}L_{\nut} t^{d-\alp}.
\end{equation}
We recall also that since $\psi\in\D^{\{\bL\}}(\R^n)=\Rou{\bL}{\R^n}\cap\D(\R^n)$, there are constants $C_0$ and
$h_0$ such that for $\nu\in\N_0^n$
\begin{equation}\label{PsiEstimate}
	\abs*{D^\nu\psi(y)}\leq C_0h_0^{\nut}L_{\nut}
\end{equation}
for all $y\in\R^n$.
Without loss of generality we will assume that $h_0\geq 2C_P$.

Now, if we compute $P^ku$, $k\in\N_0$, we see that
\begin{equation}\label{VectorIntegral}
	P^ku(x)=\int_1^\infty\negthickspace Q_k(x,t)\Phi_\bN(t)e^{i\xi_0(x-x_0)}\,dt
\end{equation}
where the functions $Q_k$ are iteratively defined by
\begin{subequations}\label{Iteration}
\begin{align}
	Q_0(x,t)&= \psi\bigl(t^{\eps}(x-x_0)\bigr)\\
	\shortintertext{and}
	Q_{k+1}(x,t)&=\sum_{\alp\leq d}\frac{1}{\alpha!}\partial_\xi^\alpha p(x,t\xi_0)
	D^\alpha_x Q_k(x,t),\qquad k\in\N_0.\label{Iteration1}
\end{align}
\end{subequations}
We will need the following Lemma:
\begin{Lem}\label{IteratesLemma}
	There exists a constant $A>0$ such that for all $k\in\N_0$, all $\nu\in\N_0^n$, 
	all $x\in B_{0}$ and all $t\geq 1$:
	\begin{equation}\label{AuxEstimate}
		\abs*{D_x^\nu Q_k(x,t)}\leq C_0\left(2h_0t^\eps\right)^{\nut}A ^k
		\left(t^{(d-\eps)k}L_{\nut}+t^{k\eps(2d-1)}L_{\nut+dk}\right).
	\end{equation}
\end{Lem}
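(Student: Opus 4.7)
The plan is to argue by induction on $k$. For the base case $k=0$, the chain rule applied to $Q_0(x,t)=\psi(t^{\eps}(x-x_0))$ together with \eqref{PsiEstimate} yields immediately $|D^\nu Q_0(x,t)|\leq C_0 (h_0 t^\eps)^{|\nu|} L_{|\nu|}$, which is comfortably dominated by the claimed bound with $A^0=1$.

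For the inductive step, I would expand $D^\nu_x Q_{k+1}$ using Leibniz on the recursion \eqref{Iteration1}:
\begin{equation*}
D^\nu_x Q_{k+1}(x,t) = \sum_{|\alpha|\leq d}\frac{1}{\alpha!}\sum_{\beta\leq\nu}\binom{\nu}{\beta}\,D^\beta_x\bigl[\partial_\xi^\alpha p(x,t\xi_0)\bigr]\,D^{\nu-\beta+\alpha}_x Q_k(x,t).
\end{equation*}
The crucial geometric observation is that $\supp Q_k(\cdot,t)\subseteq B_t:=\{x:|x-x_0|\leq 2\delta t^{-\eps}\}$ for every $k$, since the cut-off $\psi(t^\eps(\cdot-x_0))$ is present at every stage of the iteration and differentiation does not enlarge its support. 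Hence on the ``diagonal'' term $(\alpha,\beta)=(0,0)$, namely $p(x,t\xi_0)\,Q_k(x,t)$, one can invoke the improved bound \eqref{Condition2}, $|p(x,t\xi_0)|\leq Dt^{d-\eps}$; every other pair is handled by the generic estimate \eqref{Condition1}. Plugging in the IH and pooling the weight-sequence factors via log-convexity, $L_{|\beta|}L_{|\nu|-|\beta|+|\alpha|+dk}\leq L_{|\nu|+|\alpha|+dk}\leq L_{|\nu|+d(k+1)}$, the diagonal term multiplies Term~1 of the IH by $Dt^{d-\eps}$ to produce exactly Term~1 at level $k+1$, while Term~2 of the IH together with every non-diagonal contribution is shown to fit into Term~2 at level $k+1$. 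This step pins down the constraint $\eps\geq 1/2$, which is precisely what guarantees $d-\eps\leq\eps(2d-1)$ and, more generally, $d-|\alpha|(1-\eps)-\eps|\beta|\leq\eps(2d-1)$ for $(\alpha,\beta)\neq(0,0)$, so that all relevant powers of $t$ at level $k+1$ come out bounded by $t^{(k+1)\eps(2d-1)}$.

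The main obstacle will be the combinatorial bookkeeping. The Leibniz $\beta$-sum naturally produces a factor $\sum_\beta\binom{\nu}{\beta}(C_P/(2h_0))^{|\beta|}=(1+C_P/(2h_0))^{|\nu|}$, which must be absorbed into the factor $(2h_0 t^\eps)^{|\nu|}$; this is where the normalization $h_0\geq 2C_P$ and the slack built into the IH-factor $(2h_0 t^\eps)^{|\nu|}$ over the natural base-case factor $(h_0 t^\eps)^{|\nu|}$ play their role. Combined with the finite $\alpha$-sum $\sum_{|\alpha|\leq d}(2h_0)^{|\alpha|}/\alpha!$, the constant $D$ from \eqref{Condition2}, and the per-iteration geometric prefactors, everything can be absorbed into a single constant $A>0$ depending only on $d$, $n$, $h_0$, $C_P$, and $D$, closing the induction.
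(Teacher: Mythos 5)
Your high-level skeleton agrees with the paper's: induction on $k$, Leibniz applied to \eqref{Iteration1}, the support observation that $\supp Q_k(\cdot,t)\subseteq B_t$ (which lets you use the sharpened estimate \eqref{Condition2} for the $(\alpha,\nu')=(0,\nu)$ term), and \eqref{Condition1} for everything else. But the way you propose to close the induction is wrong, and wrong in a way that matters for the paper. You want to funnel all cross-contributions — the diagonal factor $p(x,t\xi_0)$ acting on Term~2 of the hypothesis, and every non-diagonal contribution — into Term~2 at level $k{+}1$. Take the cleanest piece: diagonal $\times$ Term~2 of the IH carries the $t$-power $d-\eps + k\eps(2d-1)$, which is dominated by Term~2 of level $k{+}1$ (power $(k+1)\eps(2d-1)$) only if $d-\eps\leq\eps(2d-1)$, i.e.\ $\eps\geq 1/2$; trying to push it into Term~1 instead fails because the $L$-index $\lvert\nu\rvert+dk$ is too large. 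You correctly read off the constraint $\eps\geq 1/2$ — but the lemma is applied precisely with $\eps<1/2$. In Theorem~\ref{TechnicalTheorem} one chooses $\eps\leq 1/2$ to guarantee $\tau<d/(d-\eps)<2d/(2d-1)$, and in Theorem~\ref{FiniteVersion} the explicit choice is $\eps=d(\widetilde{\gamma}-\gamma_0)/(2d\widetilde{\gamma}-\gamma_0)<1/2$. A proof of Lemma~\ref{IteratesLemma} valid only for $\eps\geq 1/2$ therefore cannot support the results that rely on it.

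The missing ingredient is Lemma~\ref{D-AuxLemma1}: for $\rho,R\geq 1$ and a weight sequence $\bL$, one has $\rho^j L_{k+l}R^l\leq\rho^{j+l}L_k+L_{j+k+l}R^{j+l}$. Setting $\rho=t^{1-\eps/d}$, $R=t^{\eps(2-1/d)}$ and $\Lambda(k,\nu)=\rho^{dk}L_{\lvert\nu\rvert}+R^{dk}L_{\lvert\nu\rvert+dk}$, this lets you distribute a cross-term such as $\rho^d L_{\lvert\nu\rvert+dk}R^{dk}$ across \emph{both} summands of $\Lambda(k+1,\nu)$:
\begin{equation*}
\rho^d L_{\lvert\nu\rvert+dk}R^{dk}\leq\rho^{d(k+1)}L_{\lvert\nu\rvert}+L_{\lvert\nu\rvert+d(k+1)}R^{d(k+1)},
\end{equation*}
so that $t^{d-\eps}\Lambda(k,\nu)\leq 2\Lambda(k+1,\nu)$ and, for $0<\lvert\alpha\rvert\leq d$, $\rho^{d-\lvert\alpha\rvert}R^{\lvert\alpha\rvert}\Lambda(k,\nu+\alpha)\leq 2\Lambda(k+1,\nu)$. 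Lemma~\ref{D-AuxLemma1} is proved by a case split on whether $\rho\geq\mu_{k+l}R$, using only that $(\mu_k)$ is increasing; it imposes no ordering between $\rho$ and $R$ and hence no constraint on $\eps$ beyond $0<\eps<1$. It is this dichotomy — not a monotone comparison of $t$-powers that would force one of the two summands to dominate — that makes the induction close uniformly in $\eps$.
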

\begin{proof}
	We prove the statement by induction in $k$ where the induction hypothesis for step $k$ is
	that \eqref{AuxEstimate} is satisfied for all $\nu$, $x$ and $t$.
	For $k=0$ it is clear that \eqref{AuxEstimate} follows from \eqref{PsiEstimate}.
	Assuming that \eqref{AuxEstimate} is true at step $k$ we will show that \eqref{AuxEstimate}
	is still true at step $k+1$ if we choose the constant $A$ suitable.
	For simplicity, we set
	\begin{align*}
		\rho&=t^{1-\eps/d},\\
		R&=t^{\eps(2-1/d)}.
	\end{align*}
	Observe that $\rho^d=t^{d-\eps}$ and that according to Lemma \ref{D-AuxLemma1} we have that
	\begin{equation*}
		\rho^d L_{\nut+dk} R^{dk}\leq \rho^{d(k+1)}L_{\nut}+L_{\nut+d(k+1)}R^{d(k+1)}.
	\end{equation*}
	If we furthermore put
	\begin{equation*}
		\Lambda(k,\nu)= \rho^{dk}L_{\nut}+R^{dk}L_{\nut+dk}
	\end{equation*}
	then we conclude that
	\begin{gather}\label{Condition3}
		\begin{aligned}
			t^{d-\eps}\Lambda(k,\nu)&\leq \rho^{d(k+1)}L_{\nut}+\rho^dL_{\nut+dk}R^{dk}\\
			&\leq 2\Lambda(k+1,\nu)
		\end{aligned}\\
		\shortintertext{and for all $\alp\leq d$, applying again Lemma \ref{D-AuxLemma1},
		we obtain}
		\begin{aligned}\label{Condition4}
			\rho^{d-\alp}R^{\alp}\Lambda(k,\nu+\alpha)&\leq \rho^{d(k+1)-\alp}L_{\nut+\alp}R^\alp
			+\rho^{d-\alp}L_{\nut+\alp+dk}R^{\alp+dk}\\
			&\leq 2\rho^{d(k+1)}L_{\nut}+2R^{d(k+1)}L_{\nut +d(k+1)}\\
			&\leq 2\Lambda(k+1,\nu).
		\end{aligned}
	\end{gather}
	
	We continue by differentiating \eqref{Iteration1}:
	\begin{equation*}
		D_x^\nu Q_{k+1}(x,t)=\sum_{\alp\leq d}\sum_{\nu^\prime\leq\nu}\frac{1}{\alpha !}\binom{\nu}{\nu^\prime}
		\left(D_x^{\nu-\nu^\prime}\partial^\alpha_{\xi}p\right)(x,t\xi_0)D^{\nu^\prime+\alpha}_xQ_k(x,t).
	\end{equation*}
	We estimate $\abs{D_x^\nu Q_{k+1}(x,t)}$ by $I_1$, $I_2$ and $I_3$ where:
	\begin{align*}
		I_1&=\abs*{p(x,t\xi_0)}\abs*{D_x^\nu Q_k (x,t)}\\
		I_2&=\sum_{\nu^\prime<\nu}\binom{\nu}{\nu^\prime}\abs*{D_x^{\nu-\nu^\prime}
			p(x,t\xi_0)}\abs*{D_x^{\nu^\prime} Q_k(x,t)}\\
		I_3&=\sum_{0<\alp\leq d}\sum_{\nu^\prime\leq\nu}\frac{1}{\alpha !}\binom{\nu}{\nu^\prime}
		\abs*{D_x^{\nu-\nu^\prime}\partial_\xi^\alpha p(x,t\xi_0)}
		\abs*{D_x^{\nu^\prime+\alpha} Q_k(x,t)}
	\end{align*}
	Now observe that if $t$ is fixed the support of $Q_k(\,.\,,t)$ is contained in
	the set $B_t=\Set{x\given\abs{x-x_0}\leq 2\delta t^{-\eps}}$ for all $k$.
	Thence by utilizing the induction hypothesis and \eqref{Condition2} we obtain that
	\begin{align}\notag
		I_1&\leq Dt^{d-\eps}C_0\left(h_0t^\eps\right)^{\nut}\Lambda(k,\nu)	A^k\\
		\shortintertext{and thus \eqref{Condition3} implies that}
		I_1&\leq C_0\left(h_0t^\eps\right)^{\nut}2D\Lambda(k+1,\nu)A^k.\label{Term1}
	\end{align}
	Similarly, by \eqref{Condition1} and the induction hypothesis we have that
	\begin{equation*}
		I_2	\leq A^k\sum_{\nu^\prime<\nu}\binom{\nut}{\nupt}
		C_P^{\abs{\nu-\nu^\prime} +1}t^d L_{\abs{\nu-\nu^\prime}} 
		C_0\left(h_0t^\eps\right)^{\nupt}\Lambda\left(k,\nu^\prime\right).
	\end{equation*}
	By writing $t^d=t^\eps t^{d-\eps}$ and again using \eqref{Condition3} we obtain
	\begin{equation*}
		I_2\leq A^k\sum_{\nu^\prime<\nu}\binom{\nut}{\nupt}C_ P^{\abs{\nu-\nu^\prime}+1}t^\eps L_{\abs{\nu-\nu^\prime}} C_0
		\left(h_0t^\eps\right)^{\nupt}2\Lambda\left(k+1,\nu^\prime\right).
	\end{equation*}
Now, since $\bL$ is a weigth sequence, we have the following estimate:
\begin{equation}\label{Condition5}
	L_{\abs{\nu-\nu^\prime}}\Lambda\left(k+1,\nu^\prime\right)\leq \Lambda(k+1,\nu).
\end{equation}
	Thence
	\begin{equation*}
		I_2\leq \sum_{\nu^\prime<\nu}2^{\nut}
		\left(\frac{C_P}{h_0t^\eps}\right)^{\abs{\nu-\nu^\prime}}
		 t^\eps C_0C_P
		\left(h_0t^\eps\right)^{\nut}2\Lambda(k+1,\nu)A^k.
	\end{equation*}
	If we recall that we have chosen $h_0\geq 2C_P$ and $t\geq 1$ and set 
	\begin{equation*}
		E=\sum_{\alpha\geq 0}\frac{1}{2^\alp}
	\end{equation*}
	then
	\begin{equation}\label{Term2}
		I_2 \leq \frac{2C_P}{h_0}EC_0C_P\left(2h_0t^\eps\right)^{\nut}\Lambda(k+1,\nu) A^k.
	\end{equation}
	Finally, according to \eqref{Condition1} and the induction hypothesis we can estimate $I_3$ by
	\begin{equation*}
		I_3\leq \sum_{0<\alp\leq d}\sum_{\nu^\prime\leq\nu}\binom{\nut}{\nupt}
		C_P^{\abs{\nu-\nu^\prime}+1}t^{d-\alp}C_0\left(h_0t^\eps\right)^{\nupt+\alp}
		\Lambda(k,\nu^\prime+\alpha)A^k
	\end{equation*}
	By the definition of $\rho$ and $R$ we have for all $\alpha\neq 0$ that
	\begin{equation*}
		t^{d-\alp+\eps\alp}\leq\rho^{d-\alp}R^{\alp}
	\end{equation*}
	which together with \eqref{Condition4} implies that
	\begin{equation*}
		t^{d-\alp+\eps\alp}\Lambda\left(k,\nu^\prime+\alpha\right)\leq 2\Lambda\left(k+1,\nu^\prime\right).
	\end{equation*}
	Thus with \eqref{Condition5} we obtain that
	\begin{equation*}
		I_3\leq\sum_{0<\alp\leq d}\sum_{\nu^\prime\leq \nu}
		\left(\frac{C_P}{h_0t^\eps}\right)^{\abs{\nu-\nu^\prime}}
		C_PC_0h_0^\alp\left(2h_0t^\eps\right)^{\nut}2\Lambda(k+1,\nu)A^k.
	\end{equation*}
	Setting $h_1=\sum_{\alp\leq d}h_0^\alp$ it follows that
	\begin{equation}\label{Term3}
		I_3\leq 2Eh_1C_PC_0\left(2h_0t^\eps\right)^{\nut}\Lambda(k+1,\nu)A^{k+1}.
	\end{equation}
	Combining \eqref{Term1}, \eqref{Term2} and \eqref{Term3} we see that
	\begin{equation*}
		I_1+I_2+I_3\leq C_0\left(2h_0t^\eps\right)^{\nut}\Lambda(k+1,\nu)A^{k+1}
	\end{equation*}
	if we choose 
	\begin{equation*}
		A\geq 2D+\frac{2C_PE}{h_0}+2Eh_1C_P.
	\end{equation*}
\end{proof}
If we set $\nu=0$ in \eqref{AuxEstimate} we obtain that
\begin{equation}\label{VectorEstimate1}
	\abs*{Q_k(x,t)}\leq C_0A^k\left(\rho^{dk}+R^{dk}L_{dk}\right)
\end{equation}
where we recall that $\rho=t^{1-\eps/d}$ and $R=t^{\eps(2-1/d)}$.
By the definition of $\omega_\bM$ we have that
\begin{equation*}
	\rho^{dk}\leq  M_{dk}\exp\left(\omega_\bM(\rho)\right).
\end{equation*}
The second term on the right-hand side of \eqref{VectorEstimate1} must be dealt with differently.
If we assume that there is a weight sequence $\bV$ such that $\bL\bV\preceq\bM$ then
using
\begin{equation*}
	R^{dk}\leq V_{dk}\exp\left(\omega_{\bV}(R)\right),
\end{equation*}
we conclude that there are constants $C, h>0$ such that
\begin{equation*}
	\abs*{Q_k(x,t)}\leq Ch^k M_{dk}\left(\exp\left(\omega_\bM\left(\rho\right)\right)+
	\exp\left(\omega_\bV\left(R\right)\right)\right).
\end{equation*}
Now we recall from \eqref{Kernelestimates2} that there are constants $A_2,B_2\geq 1$ such that
\begin{equation*}
	\abs*{\Phi_\bN(t)}\leq A_2\exp\left(-\omega_\bN\left(\frac{t}{B_2}\right)\right)
\end{equation*}
for $t>0$.
Combining these estimates we obtain from \eqref{VectorIntegral} that
\begin{equation}\label{VectorEstimate2}
\abs*{P^ku(x)}\leq A_2Ch^k M_{dk}
\int_1^\infty\negthickspace\exp\left(-\omega_\bN\left(\frac{t}{B_2}\right)\right)
\Biggl(\exp\left(\omega_\bM\left(t^{1-\eps/d}\right) \right) +
\exp\left(\omega_\bV\left(t^{\eps(2-1/d)}\right)\right)\Biggr)\,dt
\end{equation}
for all $x\in B_0$ and all $k\in\N_0$.
If we could choose $\bN$, $\bV$ and $\eps$ in such a way that the integral in \eqref{VectorEstimate2}
converges then we would have shown that $u\in\vRou{\bM}{\Omega}$, since $\supp u\subseteq B_{0}$.

As a first abstract result we note:

\begin{Thm}\label{TechnicalTheorem}
	Let $\bL$, $\bM$, $\bN$, $\bV$ be four weight sequences which satisfy the following properties
	for some $d\in\N$:
\begin{itemize}
	\item $\bM\precnapprox \bN$ and $\gamma(\bN)>0$.
	\item $\bL$ is non-quasianalytic.
	\item $\bV\leq\bM$ and $\bL\bV\preceq\bM$.
	\item There are constants $1<\tau<2d/(2d-1)$ and $A>1$ such that $\bN\leq A\bV^{\tau}$.
\end{itemize}

If $P$ is a non-elliptic linear differential operator of order $d$
 with coefficients in $\Rou{\bL}{\Omega}$,
then there is a smooth function $u\in\E(\Omega)$ such that 
$u\in\vRou{\bM}{\Omega}$
and $u\notin\Rou{\bT}{\Omega}$ for any weight sequence $\bT\precnapprox\bN$. 
Thus in particular $u\notin\Rou{\bM}{\Omega}$.
\end{Thm}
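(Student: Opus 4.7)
The plan is to handle the two tasks the paper has already set up via \eqref{VectorEstimate2} and \eqref{OptimalEstimate}: choose $\eps\in(0,1)$ so that the integral in \eqref{VectorEstimate2} converges (which will yield $u\in\vRou{\bM}{\Omega}$, since $\supp u\subseteq B_0$), and use \eqref{OptimalEstimate} to exclude $u\in\Rou{\bT}{\Omega}$ for any $\bT\precnapprox\bN$.

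For the convergence of the integral, first fix $\sigma$ with $\tau<\sigma<2d/(2d-1)$ (possible by hypothesis), and then $\eps$ with $d(1-1/\sigma)<\eps<d/(\sigma(2d-1))$; the second interval is non-empty precisely because $\sigma<2d/(2d-1)$. With this choice both exponents $\beta_1:=\sigma(1-\eps/d)$ and $\beta_2:=\sigma\eps(2-1/d)$ are strictly less than $1$. The refinement \eqref{AuxEstimate3} of Lemma \ref{auxlemma}, applied to the hypothesis $\bN\leq A\bV^{\tau}$ with any $a\in(0,1)$ and this $\sigma$, produces a constant $C\geq 1$ such that $\omega_{\bV}(s)\leq\tau^{-1}\omega_{\bN}(as^{\sigma})+C$ for $s\geq 0$. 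Combined with $\omega_{\bM}\leq\omega_{\bV}$ (which follows from $\bV\leq\bM$), this yields
\begin{equation*}
\omega_{\bM}\bigl(t^{1-\eps/d}\bigr)\leq\tau^{-1}\omega_{\bN}\bigl(at^{\beta_1}\bigr)+C,\qquad \omega_{\bV}\bigl(t^{\eps(2-1/d)}\bigr)\leq\tau^{-1}\omega_{\bN}\bigl(at^{\beta_2}\bigr)+C.
\end{equation*}

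Setting $\beta:=\max(\beta_1,\beta_2)<1$, the integrand in \eqref{VectorEstimate2} is then bounded up to constants by $\exp\bigl(-\omega_{\bN}(t/B_2)+\tau^{-1}\omega_{\bN}(at^{\beta})\bigr)$. Since $at^{\beta}\leq t/B_2$ for $t$ sufficiently large and $\omega_{\bN}$ is increasing, the exponent is at most $-(1-\tau^{-1})\omega_{\bN}(t/B_2)$. The assumption $\lim_k\sqrt[k]{N_k}=\infty$ from \eqref{Limit2} is equivalent to $\omega_{\bN}(t)/\log t\to\infty$, so $\exp(-(1-\tau^{-1})\omega_{\bN}(t/B_2))$ decays faster than any negative power of $t$ and the integral converges. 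Feeding this back into \eqref{VectorEstimate2} gives uniform estimates $|P^k u(x)|\leq C''\tilde h^k M_{dk}$ on $B_0$, from which the required $L^2_{\mathrm{loc}}$ bounds follow, so $u\in\vRou{\bM}{\Omega}$.

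For the remaining assertion, assume $\bT\precnapprox\bN$ and, for contradiction, $u\in\Rou{\bT}{\Omega}$. Then on a neighbourhood of $x_0$ there exist $C',h'>0$ with $|D_{\xi_0}^k u(x_0)|\leq C'(h')^k T_k$; combined with $|D_{\xi_0}^k u(x_0)|\geq Q_1^{k+1}N_k-C/(k+1)$ from \eqref{OptimalEstimate} and the fact that $\sqrt[k]{N_k}\to\infty$ forces $(k+1)Q_1^{k+1}N_k\to\infty$, for $k$ large we obtain $\tfrac12 Q_1^{k+1}N_k\leq C'(h')^k T_k$, hence $\bN\preceq\bT$, contradicting $\bT\precnapprox\bN$. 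Specializing to $\bT=\bM$ yields $u\notin\Rou{\bM}{\Omega}$. The only real subtlety is the parameter juggling in the first step, where $\sigma$, $\eps$ and $a$ must be chosen in the right order so that $\beta_1,\beta_2<1$ and $at^{\beta}\leq t/B_2$ eventually; it is at this point that the hypothesis $\tau<2d/(2d-1)$ is essential.
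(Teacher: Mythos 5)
Your argument is correct and follows the same overall structure as the paper's proof: construct $u$ via \eqref{FctDef}, derive \eqref{VectorEstimate2} from Lemma \ref{IteratesLemma} and $\bL\bV\preceq\bM$, choose $\eps$ via the hypothesis $\tau<2d/(2d-1)$, invoke Lemma \ref{auxlemma}/\eqref{AuxEstimate3} to kill the growth, and use \eqref{OptimalEstimate} for the non-membership. The one place you diverge is the bookkeeping of the two exponential terms in \eqref{VectorEstimate2}: the paper first forces $\eps\leq 1/2$ so that $t^{\eps(2-1/d)}\leq t^{1-\eps/d}$ and $\omega_\bM\leq\omega_\bV$ collapse the two terms into the single $\exp(\omega_\bV(t^{1-\eps/d}))$, and then picks $\sigma=d/(d-\eps)$ so that the substitution $s=t^{1-\eps/d}$ makes $s^\sigma$ equal $t$ exactly, yielding $-\omega_\bN(t/B_2)+\omega_\bV(t^{1-\eps/d})\leq-(1-\tau^{-1})\omega_\bN(t/B_2)+\widetilde C$ pointwise. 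You instead keep both terms, choose an auxiliary $\sigma\in(\tau,2d/(2d-1))$, arrange $\beta_1,\beta_2<1$, and then rely on the eventual domination $at^\beta\leq t/B_2$; this costs you nothing beyond a "for $t$ sufficiently large" and the observation that the remaining finite piece of the integral is harmless. Both parametrizations use exactly the available slack $\tau<2d/(2d-1)$, and your verification that the interval $\bigl(d(1-1/\sigma),\,d/(\sigma(2d-1))\bigr)$ is non-empty is the same computation the paper implicitly performs when it checks $d(1-1/\tau)<1/2$. In short: same decomposition, same key lemma, a marginally softer but equally valid choice of exponents. Your treatment of the non-membership via \eqref{OptimalEstimate} is also the paper's argument, correctly handling the $-C/(k+1)$ error term.
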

\begin{proof}
	There is an optimal $\{\bN\}$-flat function $G_\bN$ since $\gamma(\bN)>0$.
	Thus we can define the function $u$ by \eqref{FctDef}, i.e.
	\begin{equation*}
		u(x)=\int_{1}^{\infty}\negthickspace\psi\bigl(t^\eps(x-x_0)\bigr)\Phi_\bN(t)
		e^{it\xi_0(x-x_0)}\,dt
	\end{equation*}
where $\Phi_\bN(t)=G_\bN(1/t)$, $(x_0,\xi_0)$ is a non-elliptic point of $P$,
$\psi\in\D_{\{\bL\}}(\R^n)$ is a function with suitable compact support as above  and
$0<\eps<1$ is a parameter to be determined.
The estimate \eqref{OptimalEstimate} gives that $u\notin\Rou{\bT}{\Omega}$ for any
weight sequence
$\bT\precnapprox\bN$.

In order to estimate the iterates $P^ku$ we  apply Lemma \ref{IteratesLemma}
and since $\bL\bV\preceq\bM$ we obtain \eqref{VectorEstimate2}.
 If $\eps\leq 1/2$ then $t^{\eps(2-1/d)}\leq t^{1-\eps/d}$ for all $t\geq 1$. 
 Hence $\omega_\bV(t^{\eps(2-1/d)})\leq \omega_\bV(t^{1-\eps/d})$.
 On the other hand, due to $\bV\leq \bM$ we have $\omega_\bM(s)\leq \omega_\bV(s)$ for all $s\geq 0$.
 Thus, in summary there are constants $C,h>0$ and a constant $B_2>1$ such that 
 \begin{equation*}
 	\abs*{P^ku(x)}\leq Ch^kM_{dk} \int_1^\infty\negthickspace
 	\exp\left(-\omega_\bN\left(\frac{t}{B_2}\right)+\omega_\bV\left(t^{1-\eps/d}\right) \right)
 \,dt.
 \end{equation*}

By assumption there are $A\geq 1$ and $1<\tau<2d/(2d-1)$ such that $\bN\leq A\bV^\tau$.
We choose $\eps\leq 1/2$ such that
\begin{equation*}
	\tau<\frac{d}{d-\eps}<\frac{2d}{2d-1}.
\end{equation*}
Thus we are able to apply
 Lemma \ref{auxlemma} and according to \eqref{AuxEstimate3} there is a constant $\widetilde{C}>0$ such that 
\begin{equation*}
	\omega_\bV(s)\leq \tau^{-1}\omega_\bN\left(\frac{s^{d/(d-\eps)}}{B_2}\right)+\widetilde{C}
\end{equation*}
for all $s\geq 0$. If we set $s=t^{1-\eps/d}$ then we obtain
\begin{equation*}
	-\omega_\bN\left(\frac{t}{B_2}\right)+\omega_\bV\left(t^{1-\eps/d}\right)
	\leq -\left(1-\tau^{-1}\right)\omega_\bN\left(\frac{t}{B_2}\right)+\widetilde{C}.
\end{equation*}
It follows that there are constant $C,h>0$ such that
\begin{equation*}
	\norm*{P^ku}{L^2(B_{2\delta})}\leq Ch^k M_{dk}
	\int_1^\infty\negthickspace
	\exp\left(-\bigl(1-\tau^{-1}\bigr)\omega_\bN\bigl(t/B_2\bigr) \right)\,dt.
\end{equation*}
The integral on the right-hand side of the above estimate converges since $(1-\tau^{-1})>0$
and $\omega_\bN(s)$ increases faster then $\log s^p$ for any $p\in\N$ when $s\rightarrow\infty$,
see \cite{Komatsu73}.
Therefore $u\in\vRou{\bM}{\Omega}$ because $\supp u\subseteq B_{0}$.
\end{proof}

\begin{Cor}\label{GammaInfinity}
	Let $\bM$ be a weight sequence such that $\gamma(\bM)=\infty$ and $\bT$ be a weight sequence
	satisfying $\bT\preceq \bM^\rho$ for all $\rho>0$.
	If $P$ is a non-elliptic differential operator with coefficients in $\Rou{\bT}{\Omega}$
	then there is a smooth function $u$ such that 
	\begin{equation*}
		u\in\vBeu{\bM}{\Omega}\!\setminus\!\Rou{\bM}{\Omega}.
	\end{equation*}
\end{Cor}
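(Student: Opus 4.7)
The approach is to invoke Theorem~\ref{TechnicalTheorem}, with its four weight sequences $\bL,\widetilde{\bM},\bN,\bV$ all played by suitable powers of $\bM$. The enabling observation is that $\gamma(\bM)=\infty$ gives $\gamma(\bM^{\rho})=\rho\,\gamma(\bM)=\infty$ for every $\rho>0$, so every positive power of $\bM$ is strongly non-quasianalytic and has positive $\gamma$-index. Combined with $\sqrt[k]{M_k}\to\infty$ from \eqref{Limit2} this also yields $\bM^{\alpha}\lhd\bM^{\beta}$ whenever $0<\alpha<\beta$, since $M_k^{\alpha-\beta}h^{-k}=(M_k^{(\alpha-\beta)/k}/h)^{k}\to 0$ for every fixed $h>0$.

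Let $d$ be the order of $P$. First I would fix $\tau\in(1,\,2d/(2d-1))$ and $\sigma\in(1,\tau)$ (for concreteness $\sigma=(1+\tau)/2$), so $\sigma/\tau<1$. Set $\bN:=\bM^{\sigma}$ and $\bV:=\bM^{\sigma/\tau}$; then $\bN=\bV^{\tau}$ (the fourth bullet of Theorem~\ref{TechnicalTheorem} holds with $A=1$) and $\gamma(\bN)=\sigma\cdot\infty>0$. Next I would pick $\rho'\in(0,\,1-\sigma/\tau)$ and set $\bL:=\bM^{\rho'}$, $\widetilde{\bM}:=\bL\bV=\bM^{\rho'+\sigma/\tau}$, so that $\widetilde{\bM}$ plays the role of the theorem's $\bM$. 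Then $\bL$ is (strongly) non-quasianalytic, and since $\bT\preceq\bM^{\rho}$ for every $\rho>0$, in particular $\bT\preceq\bL$, so the coefficients of $P$ lie in $\Rou{\bL}{\Omega}$. The relations $\bV\leq\widetilde{\bM}$ (from $M_k\geq 1$, $\rho'>0$) and $\bL\bV=\widetilde{\bM}$ are immediate. Finally $\rho'+\sigma/\tau<1<\sigma$ together with the preliminary observation give $\widetilde{\bM}\lhd\bM\lhd\bN$, whence $\widetilde{\bM}\precnapprox\bN$ (the first bullet of Theorem~\ref{TechnicalTheorem}) and $\bM\precnapprox\bN$.

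Theorem~\ref{TechnicalTheorem} now produces a smooth $u\in\vRou{\widetilde{\bM}}{\Omega}$ that is not in $\Rou{\bT'}{\Omega}$ for any $\bT'\precnapprox\bN$; specializing to $\bT'=\bM$ gives $u\notin\Rou{\bM}{\Omega}$. The Beurling inclusion $\vRou{\widetilde{\bM}}{\Omega}\subseteq\vBeu{\bM}{\Omega}$ is a routine consequence of $\widetilde{\bM}\lhd\bM$: given $h'>0$, evaluating the relation $\widetilde{M}_{j}\leq C_{h}h^{j}M_{j}$ at $j=dk$ with $h=(h'/h_{0})^{1/d}$ converts the Roumieu estimate $\|P^{k}u\|_{L^{2}(K)}\leq C\,h_{0}^{k}\widetilde{M}_{dk}$ into the Beurling estimate $\|P^{k}u\|_{L^{2}(K)}\leq C'\,h'^{k}M_{dk}$. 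This yields $u\in\vBeu{\bM}{\Omega}\setminus\Rou{\bM}{\Omega}$ as required. The only real obstacle I foresee is the exponent bookkeeping: one must simultaneously arrange $1<\sigma<\tau$ (to keep $\bM\precnapprox\bN$ while leaving room for $\bV$) and $\rho'+\sigma/\tau<1$ (so that the Roumieu conclusion of Theorem~\ref{TechnicalTheorem} upgrades to the desired Beurling conclusion); the choices $\sigma=(1+\tau)/2$ and any sufficiently small $\rho'>0$ make this work, after which verifying the four bullets of Theorem~\ref{TechnicalTheorem} is mechanical.
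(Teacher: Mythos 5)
Your proposal is correct and takes essentially the same route as the paper: apply Theorem~\ref{TechnicalTheorem} with $\bL,\widetilde{\bM},\bV,\bN$ all chosen as powers $\bM^{a},\bM^{c},\bM^{b},\bM^{e}$ with $a+b=c<1<e$ and $e/b<2d/(2d-1)$, then downgrade the Roumieu conclusion $u\in\vRou{\widetilde{\bM}}{\Omega}$ to $\vBeu{\bM}{\Omega}$ via $\widetilde{\bM}\lhd\bM$. The paper parameterizes the exponents as $q(1-\sigma),q\sigma,q,q\rho$ instead of your $\rho',\sigma/\tau,\rho'+\sigma/\tau,\sigma$, but the two parameterizations describe the same family of choices, and your bookkeeping checks out.
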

\begin{proof}
Let $d$ be the order of $P$. We choose real parameters $0<q,\sigma<1$ and $\rho>1$ such that 
\begin{equation*}
	1<\rho<\frac{2d}{2d-1}\sigma\quad \text{and}\quad 1<\rho q.
\end{equation*}
	We set $\widetilde{\bM}=\bM^q$,
	$\bL=\widetilde{\bM}^{1-\sigma}=\bM^{q(1-\sigma)}$,
	$\bV=\widetilde{\bM}^{\sigma}=\bM^{q\sigma}$ and $\bN=\widetilde{\bM}^{\rho}=\bM^{q\rho}$.
	Then $\bT\preceq\bL\lhd\widetilde{\bM}\lhd\bM\lhd\bN$. 
	Moreover $\gamma(\bL)=\infty$ and thus $\bL$ is, in particular, non-quasianalytic.
	Finally $\bV\leq \bM$ and obviously $\bL\bV\preceq\widetilde{\bM}$.
	It follows also that $\gamma(\bN)>0$ and
		$\bN=\bV^{\tau}$
where $\tau=\rho/\sigma$.

Hence we can apply Theorem \ref{TechnicalTheorem} and infer the existence of a function 
$u\in\E(\Omega)$ such that $u\in\vRou{\widetilde{\bM}}{\Omega}\subseteq\vBeu{\bM}{\Omega}$
and $u\notin\Rou{\bM}{\Omega}$ since $\widetilde{\bM}\lhd\bM\lhd\bN$.
\end{proof}
In order to finish the proof of Theorem \ref{MainThm} it remains to  consider the remaining case  
when $\gamma(\bM)>1$ is finite. For this it turns out to be convenient to follow
the original proof of Metivier in the Gevrey case more closely.
\begin{Thm}\label{FiniteVersion}
	Let $\bM$ be a weight sequence such that $1<\gamma(\bM)<\infty$.
	If $P$ is a non-elliptic differential operator of class $\{\bM^\rho\}$, 
	for some $1<1/\rho<\gamma(\bM)$, in $\Omega$
	then there is a smooth function $u\in\E(\Omega)$ such that
	\begin{equation*}
		u\in\vBeu{\bM}{\Omega}\!\setminus\!\Rou{\bM}{\Omega}.
	\end{equation*}
\end{Thm}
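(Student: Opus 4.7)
The strategy is to mimic the proof of Corollary \ref{GammaInfinity}, working with the shifted weight sequence $\widetilde{\bM}=\bM^q$ for some $q\in(\rho,1)$. However, invoking Theorem \ref{TechnicalTheorem} directly is not possible: its restriction $\tau<2d/(2d-1)$, tied to the simplification $\eps\le 1/2$ in its proof, would force $\rho<1/(2d)$, which need not follow from the standing hypothesis $1<1/\rho<\gamma(\bM)$. I would therefore redo the final step of the proof of Theorem \ref{TechnicalTheorem} with more care, handling the two summands $\exp(\omega_{\widetilde{\bM}}(t^{1-\eps/d}))$ and $\exp(\omega_{\bV}(t^{\eps(2-1/d)}))$ in the analog of \eqref{VectorEstimate2} separately, applying Lemma \ref{auxlemma} to each with its own exponent.

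All auxiliary sequences are taken as powers of $\bM$. Set $\bL:=\bM^\rho$, $\widetilde{\bM}:=\bM^q$, $\bV:=\bM^{q-\rho}$, and $\bN:=\bM^c$. Choose, in order, $q\in\bigl(1-(1-\rho)/(2d),\,1\bigr)$, then $\eps\in\bigl(d(1-q),\,d(q-\rho)/(2d-1)\bigr)$, and then $c\in\bigl(1,\,\min\{qd/(d-\eps),\,d(q-\rho)/(\eps(2d-1))\}\bigr)$. A short calculation shows these nested intervals are nonempty for every $\rho<1$. With these choices $\bL$ is strongly non-quasianalytic because $\gamma(\bL)=\rho\gamma(\bM)>1$; $\bL\bV=\widetilde{\bM}$ gives $\bL\bV\preceq\widetilde{\bM}$ and $\bV\leq\widetilde{\bM}$; $\widetilde{\bM}\lhd\bM\precnapprox\bN$ (the last using $c>1$); and $\gamma(\bN)>0$, so an optimal $\{\bN\}$-flat function $G_\bN$ exists.

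Define $u$ by \eqref{FctDef} using $G_\bN$. The non-membership $u\notin\Rou{\bM}{\Omega}$ is immediate from \eqref{OptimalEstimate} combined with $\bM\precnapprox\bN$. For the vector estimate, Lemma \ref{IteratesLemma}, with $\bL$ controlling both the coefficients of $P$ and the bump $\psi$, together with $\bL\bV\preceq\widetilde{\bM}$, yields the analog of \eqref{VectorEstimate2} with $\widetilde{\bM}$ in place of $\bM$. Applying \eqref{AuxEstimate3} of Lemma \ref{auxlemma} to $\bN=\widetilde{\bM}^{c/q}$ with $\sigma_1=d/(d-\eps)>c/q$ and $a_1=1/B_2$ controls the first summand by $\exp((q/c)\omega_\bN(t/B_2)+\mathrm{const})$; applying it analogously to $\bN=\bV^{c/(q-\rho)}$ with $\sigma_2=d/(\eps(2d-1))>c/(q-\rho)$ and $a_2=1/B_2$ controls the second by $\exp(((q-\rho)/c)\omega_\bN(t/B_2)+\mathrm{const})$. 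Multiplied by $\exp(-\omega_\bN(t/B_2))$, both pieces are thus bounded by $\exp(-\alpha\omega_\bN(t/B_2))$ for some $\alpha\in(0,1)$, giving a convergent integral. Hence $\|P^ku\|_{L^2(B_0)}\leq Ch^k\widetilde{M}_{dk}$, which via $\widetilde{\bM}=\bM^q\lhd\bM$ upgrades to the Beurling vector estimate $u\in\vBeu{\bM}{\Omega}$.

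The main obstacle I expect is the parameter matching: the exponent constraints $c/q\leq d/(d-\eps)$ and $c/(q-\rho)\leq d/(\eps(2d-1))$ pull $\eps$ in opposite directions relative to $q$, and must be simultaneously reconciled with $c>1$, $q<1$, and $q>\rho$. Existence of a valid configuration hinges precisely on $\rho<1$; as $\rho\uparrow 1$ the admissible window for $\eps$ shrinks to zero but remains open for every $\rho<1$. Once the parameters are pinned down, the remaining analytic work is a routine adaptation of the proof of Theorem \ref{TechnicalTheorem}, the only substantive modification being the separate treatment of the two integrals.
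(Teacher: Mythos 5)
Your proposal is correct and follows the same overall strategy as the paper: all auxiliary sequences are taken as powers of $\bM$, the function $u$ is built via \eqref{FctDef} with the kernel $\Phi_\bN$ and a bump in $\Rou{\bL}{\R^n}$, Lemma \ref{IteratesLemma} supplies the iterate estimate, and one shows the resulting integral converges. You also correctly diagnose why Theorem \ref{TechnicalTheorem} cannot simply be invoked: its constraint $\tau<2d/(2d-1)$ (used to force $\eps\le 1/2$ and absorb one summand into the other) would indeed limit $\rho$ to $\rho<1/(2d)$, which the hypothesis $1<1/\rho<\gamma(\bM)$ does not provide.

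The one point where your route differs from the paper's is the final convergence step. The paper normalizes by $\bT=\bM^{1/\gamma}$ and picks $\eps$ and $\gamma'=d\widetilde\gamma/(d-\eps)=(2d\widetilde\gamma-\gamma_0)/(2d-1)$ so that all three weight functions $\omega_{\widetilde\bM}(\rho/q)$, $\omega_\bV(R/q_1)$ and $\omega_\bN(t/B_2)$ become exact scalar multiples of $\omega\bigl(t^{1/\gamma'}/B_2^{1/\gamma'}\bigr)$ via the identity $\omega_{\bT^\sigma}(s)=\sigma\omega(s^{1/\sigma})$; this lets them combine the two summands (since $\gamma_1<\widetilde\gamma$ gives $\omega_\bV(R/q_1)\le\omega_{\widetilde\bM}(\rho/q)$) and read off convergence directly from $\gamma'>\widetilde\gamma$. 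You instead keep $\eps$ in an open interval and apply Lemma \ref{auxlemma} separately to the two summands, with $\sigma_1=d/(d-\eps)$ and $\sigma_2=d/(\eps(2d-1))$, and the nested intervals for $q$, $\eps$, $c$ you write down are nonempty precisely when $\rho<1$ (a short computation confirms this). Both versions work; the paper's exact matching makes the final integral estimate an identity, while your version is a bit more generic and perhaps easier to reuse, at the cost of two invocations of Lemma \ref{auxlemma} and an open constraint on $\eps$ rather than a closed formula. Either way, once the parameters are fixed the remaining verification is the routine adaptation you describe.
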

\begin{proof}
	We define a new weight sequence $\bT$ by setting 
	\begin{equation*}
		T_k=M_k^{1/\gamma},\quad k\in\N_0,
	\end{equation*}
	where $\gamma=\gamma(\bM)$. Note that $\gamma(\bT)=1$.
	We can define a scale, i.e.\ an one-parameter family, of weight sequences by 
	$\bT^\sigma$, $\sigma>1$. Then $\bT^\gamma=\bM$ and $\bT^{\rho\gamma}=\bM^\rho$.
	We will denote the weight function associated to $\bT^\sigma$ by $\omega_\sigma$ and
	the weight function associated to $\bT=\bT^1$ by $\omega=\omega_1$.
	Clearly
	\begin{equation}\label{omegas}
		\omega_\sigma(s)=\sigma\omega\bigl(s^{1/\sigma}\bigr)
	\end{equation}
	for all $s\geq 0$.
	Now choose positive numbers $\gamma_0$ and $\widetilde{\gamma}$ such that  $\rho\gamma<\gamma_0<\widetilde{\gamma}<\gamma$
	and 
	\begin{equation*}
		\frac{\gamma-\gamma_0}{2d}>\gamma-\widetilde{\gamma},
	\end{equation*}
	where $d$ denotes the order of $P$.
	Thus $\bT^{\gamma_0}$ is a strongly non-quasianalytic weight sequence since $\gamma(\bT^{\gamma_0})=\gamma_0>\rho\gamma >1$.
	In particular $\bT^{\gamma_0}$ is  non-quasianalytic and obviously
	$\bT^{\gamma_0}\bT^{\widetilde{\gamma}-\gamma_0}= \bT^{\widetilde{\gamma}}$. 
	We may write $\bL=\bT^{\gamma_0}$, $\bV=\bT^{\widetilde{\gamma}-\gamma_0}$ 
	and $\widetilde{\bM}=\bT^{\widetilde{\gamma}}$.
	We set also 
	\begin{align*}
		\eps&=\frac{d(\widetilde{\gamma}-\gamma_0)}{2d\widetilde{\gamma}-\gamma_0}<\frac{1}{2},\\
		\gamma^\prime&=\frac{d\widetilde{\gamma}}{d-\eps}=\frac{2d\widetilde{\gamma}-\gamma_0}{2d-1}.
	\end{align*}
Observe that $\gamma<\gamma^\prime$.
	We put $\bN=\bT^{\gamma^\prime}$ and note that 
	$\widetilde{\bM}\lhd\bM\lhd\bN$.
	Since $\gamma(\bN)=\gamma^\prime>0$
 we can define $u$ by \eqref{FctDef} using the above choices of $\bL$, $\bN$ and $\eps$.
 Then
	it follows from \eqref{OptimalEstimate} that $u$ cannot be of class $\{\bM\}$ in $\Omega$ since $\bM\lhd\bN$.
	
	We recall that
	\begin{equation*}
		P^ku(x)=\int_1^\infty\negthickspace Q_k(x,t)\Phi_{\bN}(t)e^{it\xi_0(x-x_0)}\,dt
	\end{equation*} 
	where the functions $Q_k$ are defined by \eqref{Iteration}.
	If we want to estimate $\abs{P^ku(x)}$ 
	then we observe that
	\begin{equation}\label{FirstStep}
		\abs*{P^ku(x)}\leq C_2\int_1^\infty\negthickspace \abs*{Q_k(x,t)}e^{-\omega_\bN(t/B_2)}\,dt
	\end{equation}
	for some constants $C_2>0$ and $B_2>1$. 
	According to Lemma \ref{IteratesLemma} 
	there are constants $C,h>0$ such that
		\begin{equation*}
			\abs*{Q_k(x,t)}\leq Ch^k\left(t^{(d-\eps)k}+t^{k\eps(2d-1)}L_{dk}\right).
		\end{equation*}
	If we set 
	\begin{align*}
		\rho&=t^{1-\eps/d} & &\text{and} & R&=t^{\eps(2-1/d)}\\
		\intertext{then}
		\rho^{dk}&=t^{(d-\eps)k} & &\text{and} & R^{dk}&=t^{k\eps(2d-1)}.
	\end{align*}
	Observe now that
	\begin{align*}
		\frac{\rho^{dk}}{q^{dk}}&\leq \widetilde{M}_{dk}\exp\left(\omega_{\widetilde{\bM}}\left(\frac{\rho}{q}\right)\right)\\
		\shortintertext{and}
		\frac{R^{dk}}{q_1^{dk}}&\leq V_{dk}
		\exp\left(\omega_{\bV}\left(\frac{R}{q_1}\right)\right)
	\end{align*}
	for any $q,q_1>0$. We set $q=B_2^{(d-\eps)/d}$ and $q_1=B_2^{\eps(2d-1)/d}$
	where $B_2$ is the constant in the exponential in \eqref{FirstStep}.
	We have $q>q_1$ since $\eps<1/2$ and  $B_2>1$.
	Moreover we infer from \eqref{omegas} that 
	\begin{align*}
		\omega_{\widetilde{\bM}}(s)
		&=\widetilde{\gamma}\omega\left(s^{1/\widetilde{\gamma}}\right),\\
		\omega_\bV(s)&=\gamma_1\omega\left(s^{1/\gamma_1}\right),\\
		\omega_\bN(s)&=\gamma^\prime\omega\left(s^{1/\gamma^\prime}\right)
	\end{align*}
	for $s\geq 0$. Here $\omega$ is the associated weight function to $\bT=\bM^{1/\gamma}$ and
	$\gamma_1=\widetilde{\gamma}-\gamma_0$.
	In particular
	\begin{align*}
		\omega_{\widetilde{\bM}}\left(\frac{\rho}{q}\right)&=\widetilde{\gamma} \omega\left(
		\frac{\left(t^{(d-\eps)/d}\right)^{1/\widetilde{\gamma}}}{
			\left(B_2^{(d-\eps)/d}\right)^{1/\widetilde{\gamma}}}\right)
		=\widetilde{\gamma}\omega\left(\frac{t^{1/\gamma^\prime}}{B_2^{1/\gamma^\prime}}\right),\\
		\omega_\bV\left(\frac{R}{q_1}\right)&=\gamma_1\omega\left(
		\frac{\left(t^{\eps(2d-1)/d}\right)^{1/\gamma_1}}{\left(B_2^{\eps(2d-1)/d}\right)^{1/
				\gamma_1}}\right)
		=\gamma_1\omega\left(\frac{t^{1/\gamma^\prime}}{B_2^{1/\gamma^\prime}}\right)\\
		\shortintertext{since $\gamma_1d/(\eps(2d-1))=\gamma^\prime$. Finally}
		\omega_\bN\left(\frac{t}{B_2}\right)&=\gamma^\prime\omega\left(
		\frac{t^{1/\gamma^\prime}}{B_2^{1/\gamma^\prime}}\right).
	\end{align*}
	Thus we can conclude that $\omega_\bV(R/q_1)\leq\omega_{\widetilde{\bM}}(\rho/q)$ 
	since $\gamma_1<\tilde{\gamma}$.
	Therefore we have the following estimate:
	\begin{equation*}
		\abs*{Q_k(x,t)}\leq 2C \left(q^dh\right)^k \widetilde{M}_{dk}\exp\left(
		\omega_{\widetilde{\bM}}\left(\frac{\rho}{q}\right)\right).
	\end{equation*}
	It follows that
	\begin{equation}\label{FinalEstimate}
		\abs*{P^ku(x)}\leq 2CC_2\left(q^dh\right)^k\widetilde{M}_{dk}\int_1^\infty\negthickspace
		\exp\left(-\omega_\bN\left(\frac{t}{B_2}\right)
		+\omega_{\widetilde{\bM}}\left(\frac{\rho}{q}\right)\right)\,dt.
	\end{equation}
	However, our arguments above infer that
	\begin{equation*}
		-\omega_\bN\left(\frac{t}{B_2}\right)+\omega_{\widetilde{\bM}}\left(\frac{\rho}{q}\right)
		=-\gamma^\prime \omega\left(\frac{t^{1/\gamma^\prime}}{B_2^{1/\gamma^\prime}}\right)
		+\widetilde{\gamma}\omega\left(\frac{t^{1/\gamma^\prime}}{B_2^{1/\gamma^\prime}}\right)
		=-\bigl(\gamma^\prime-\tilde{\gamma}\bigr)
		\omega\left(\frac{t^{1/\gamma^\prime}}{B_2^{1/\gamma^\prime}}\right).
	\end{equation*}
	Since $\gamma^\prime>\widetilde{\gamma}$ and the associated weight function $\omega(s)$ grows faster
	than any $\log s^p$, $p\in\N$, we conclude that the integral in \eqref{FinalEstimate} converges
	and therefore $u\in\vRou{\widetilde{\bM}}{\Omega}$ because $\supp u\subseteq  B_{0}$.
	Since $\widetilde{\bM}\lhd\bM$ we obtain that $u\in\vBeu{\bM}{\Omega}$.
\end{proof}
If we apply Theorem \ref{FiniteVersion} to the Gevrey case we obtain the following corollary.
\begin{Cor}\label{GevreyCor}
	Let $1\leq r<s$ and $\Omega\subseteq\R^n$ be an open set. 
	If $P$ is a non-elliptic differential operator with coefficients in $\G^r(\Omega)$ then
	there is a smooth function $u\in\E(\Omega)$ such that 
	\begin{equation*}
		u\in\G^s(\Omega;P)\!\setminus\!\G^s(\Omega).
	\end{equation*}
\end{Cor}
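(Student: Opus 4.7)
The plan is to deduce the corollary directly from Theorem \ref{FiniteVersion} applied with $\bM = \bG^s$. First I observe that $s > r \geq 1$ forces $s > 1$, so $\gamma(\bG^s) = s$ lies in $(1,\infty)$, verifying the hypothesis on $\bM$ in Theorem \ref{FiniteVersion}.

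Next I would select a parameter $\rho \in (r/s,\,1)$; such a $\rho$ exists because $r < s$. With this choice $s\rho \in (r,s)$, so $\bM^\rho = \bG^{s\rho}$ satisfies $\G^r(\Omega) \subseteq \G^{s\rho}(\Omega) = \Rou{\bM^\rho}{\Omega}$, which means that $P$ has coefficients of class $\{\bM^\rho\}$. Moreover $1/\rho \in (1,\,s/r) \subseteq (1,\,s) = (1,\,\gamma(\bM))$ since $r \geq 1$, so the parameter condition $1 < 1/\rho < \gamma(\bM)$ required by Theorem \ref{FiniteVersion} is also satisfied.

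Theorem \ref{FiniteVersion} then produces a smooth function $u \in \E(\Omega)$ with $u \in \vBeu{\bG^s}{\Omega} \setminus \Rou{\bG^s}{\Omega}$. Since every Beurling vector of $P$ is a fortiori a Roumieu vector of $P$, this gives $u \in \vRou{\bG^s}{\Omega} = \G^s(\Omega;P)$, while $u \notin \Rou{\bG^s}{\Omega} = \G^s(\Omega)$, which is exactly what the corollary asserts. The argument is a direct specialization of Theorem \ref{FiniteVersion}, so there is no genuine technical obstacle beyond verifying the parameter bookkeeping outlined above.
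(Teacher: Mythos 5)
Your proof is correct and follows the same overall strategy as the paper: specialize Theorem \ref{FiniteVersion} with $\bM = \bG^s$. The only substantive difference is in the choice of $\rho$, and yours is actually slightly cleaner. The paper takes $\rho = r/s$ exactly, which gives $\bG^r = \bM^\rho$ on the nose but forces $1/\rho = s/r$; when $r=1$ this equals $\gamma(\bM) = s$, violating the strict inequality $1<1/\rho<\gamma(\bM)$ required by Theorem \ref{FiniteVersion}, so the paper splits off $r=1$ and handles it by citing M\'etivier's original theorem directly. By instead choosing $\rho \in (r/s,1)$ strictly, you rely only on the inclusion $\G^r \subseteq \G^{s\rho} = \Rou{\bM^\rho}{\Omega}$ rather than equality, and you get $1/\rho < s/r \leq s = \gamma(\bM)$ uniformly for all $r \geq 1$, which removes the case distinction and the external citation. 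The remaining bookkeeping (using that $\vBeu{\bG^s}{\Omega}\subseteq\vRou{\bG^s}{\Omega}=\G^s(\Omega;P)$) is correct and matches what the paper implicitly uses.
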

\begin{proof}
	Since the case $r=1$ is just \cite[Theorem 2.3]{doi:10.1080/03605307808820078}, we can assume
	that $r>1$.
	If we put $\bM=\bG^s$ then $\bG^r=\bM^{\rho}$ with $\rho=r/s$. 
	In particular $1<1/  \rho< s=\gamma(\bM)$. Hence the assertion follows from
	Theorem \ref{FiniteVersion}.
\end{proof}
Combining Corollary \ref{GevreyCor} with \cite[Theorem 1.1]{MR548225} gives immediately
Theorem \ref{GevreyThm}.

\section{Optimal functions in Denjoy-Carleman classes}\label{OptimalSection}
\begin{Def}
	Let $\bN$ be a weight sequence. We say that a smooth function $g\in\E(\Omega)$ on an open
	set $\Omega\subseteq\R^n$ is an optimal function of the Roumieu class $\Rou{\bN}{\Omega}$
	if $g\in\Rou{\bN}{\Omega}$ but $g\notin\Rou{\bT}{\Omega}$ for any
	weight sequence $\bT\precnapprox\bN$.
\end{Def}
 We can use the general approach to the construction of the function $u$ in
 Section \ref{proofs} to define optimal functions
	in Denjoy-Carleman classes.
	Indeed, assume that $\bN$ is a weight sequence which satisfies \eqref{DerivClosed} and
	$\gamma(\bN)>0$. Then there exists an optimal $\{\bN\}$-flat function $G_\bN$ which is defined
	in some sector $S_\gamma$, $\gamma<\gamma(\bN)$. 
	Now let $x_0\in\R^n$ and $\xi_0\in S^{n-1}$.
	If we set
	\begin{equation*}
		g(x)=\int_0^\infty\negthickspace\Phi_\bN(t)e^{it\xi_0(x-x_0)}\,dt, 
	\end{equation*}
where $\Phi_\bN(s)=G_\bN(1/s)$,
then $g$ is a smooth function defined on $\R^n$.
 We see as before that
\begin{equation*}
	D_{\xi_0}^kg(x_0)=\int_0^\infty\negthickspace t^k\Phi_\bN(t)\,dt
\end{equation*}
and thence, according to Proposition \ref{SanzProp},
\begin{equation*}
	D_{\xi_0}^kg(x_0)\geq B_1^{k+1}N_k
\end{equation*}
for some constant $B_1>0$. Hence $g$ cannot be of class $\{\bT\}$ near $x_0$ for any weight sequence
$\bT\precnapprox\bN$.
On the other hand we can show that $g$ is of class $\{\bN\}$ in $\Omega$ if $\bN$ satisfies
\eqref{DerivClosed}. Indeed we have that
\begin{equation*}
	\abs{D^\alpha g(x)}\leq \int_0^\infty\negthickspace t^\alp\Phi_\bN(t)\,dt
	\leq CQ_2^\alp N_\alp.
\end{equation*} 
for some constants $C,Q_2>0$ by Proposition \ref{SanzProp}.
Hence $g$ is an optimal function in the class $\Rou{\bN}{\R^n}$.
\begin{Rem}
	We may point out that this approach defining optimal functions in Denjoy-Carleman classes
	is more flexible than the usual one by Fourier series, see e.g.\ \cite{MR2384272}.
	For example, we can specify not only the point but also the direction where
	the optimality occurs.
	Depending on the problem, one can further modify the construction of optimal functions.
	Indeed, we can show, for example, that the function $u$ from Theorem \ref{MainThm}, if the weight sequence
	$\bM$ satisfies \eqref{DerivClosed}, is an optimal function in a Denjoy-Carleman class
	$\Rou{\bN}{\Omega}$ given by a weight sequence $\bN$ depending on $\bM$ and the operator $P$.
\end{Rem}
In fact, in section \ref{proofs} the function $u$ was constructed in the following way:
For a given weight sequence $\bM$ we choose suitable weight sequences $\bL$ and $\bN$ such
that $\bL\preceq\bM\precnapprox\bN$, $\bL$ is non-quasianalytic and $\gamma(\bN)>0$.
Then we define
\begin{equation*}
	u(x)=\int_1^\infty\negthickspace \psi(t^\eps(x-x_0))\Phi_\bN(t)e^{it\xi(x-x_0)}\,dt
\end{equation*}
where $0<\eps<1$ is some constant, $\xi_0\in S^{n-1}$, $x_0\in\Omega$ and 
$\psi\in\Rou{\bL}{\R^n}$ is such that $\psi(y)=1$ for $\abs{y}\geq \delta$ and $\psi(y)=0$
for $\abs{y}\geq 2\delta$ with $\delta>0$ being such that $B_0=\Set{x\in\R^n\given \abs{x-x_0}\geq 2\delta}\subseteq\Omega$. Furthermore $\Phi_\bN(t)=G_\bN(1/t)$ where $G_\bN$ is an
optimal $\{\bN\}$-flat function.
From the estimate \eqref{OptimalEstimate} it follows that $u\notin\Rou{\bT}{\Omega}$
for any weight sequence $\bT\precnapprox\bN$.
Here we claim that if $\bN$ satisfies \eqref{DerivClosed} then $u\in\Rou{\bN}{\Omega}$.
For this we consider the iterates $D_j^ku$ of $u$ with respect to the constant coefficient operator 
$D_j$, $j\in\Set{1,\dotsc,n}$.
We see that 
\begin{equation*}
	D_j^ku(x)=\int_1^\infty\negthickspace \Delta_k^j(x,t)\Phi_\bN(t)e^{it\xi_0(x-x_0)}\,dt
\end{equation*}
where the functions $\Delta_k^j$ are iteratively given by
\begin{align*}
	\Delta_0^j(x,t)&=\psi(t^\eps(x-x_0)), & j&=1,\dotsc,n,\\
	\Delta_{k+1}^j(x,t)&=D_j\Delta^j_k(x,t)+t\xi_{0,j}\Delta^j_k(x,t), & j&=1,\dotsc,n,\;k\in\N_0,
\end{align*}
where $\xi_{0,j}$ is the $j$-th component of the vector $\xi_0$.
For these functions we have a statement analogous to Lemma \ref{IteratesLemma}:
\begin{Lem}\label{D-AuxLemmaAlt}
	For each $j\in\Set{1,\dotsc,n}$ there is a constant $A_j>0$ such that 
		\begin{equation}\label{Important}
		\abs*{D^\nu_x\Delta_k^j(x,t)}\leq C_0(h_0t^\eps)^{\nut}A_j^k
		\left(t^kL_{\nut}+t^{\eps k}L_{\nut+k}\right),\qquad x\in B_0,\;t\geq 1,
	\end{equation}
	for all $\nu\in\N_0^n$ and $k\in\N_0$. Here $C_0,h_0$ are the constants from \eqref{PsiEstimate}.
\end{Lem}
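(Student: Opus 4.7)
The plan is to imitate the inductive scheme from the proof of Lemma~\ref{IteratesLemma}, but in a considerably simpler setting: the operator $D_j$ has constant coefficients and order $1$, so the recursion $\Delta_{k+1}^j = D_j\Delta_k^j + t\xi_{0,j}\Delta_k^j$ contains only two terms and no sums over multi-indices appear. Writing
\begin{equation*}
	\Lambda(k,\nu) := t^{k}L_{\nut}+t^{\eps k}L_{\nut+k},
\end{equation*}
the aim is to prove $\abs{D^\nu_x\Delta_k^j(x,t)}\leq C_0(h_0t^\eps)^{\nut}A_j^k\Lambda(k,\nu)$ by induction on $k$. The base case $k=0$ follows immediately from the chain rule and \eqref{PsiEstimate}, since $D^\nu_x\psi(t^\eps(x-x_0))=t^{\eps\nut}(D^\nu\psi)(t^\eps(x-x_0))$ yields a bound by $C_0(h_0t^\eps)^{\nut}L_{\nut}\leq C_0(h_0t^\eps)^{\nut}\Lambda(0,\nu)$.

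For the inductive step, differentiating the recursion gives
\begin{equation*}
	D^\nu_x\Delta_{k+1}^j(x,t)=D^{\nu+e_j}_x\Delta_k^j(x,t)+t\xi_{0,j}D^\nu_x\Delta_k^j(x,t),
\end{equation*}
and since $\abs{\xi_{0,j}}\leq 1$ the two resulting terms can be estimated separately. Applying the induction hypothesis to the first term produces a factor $h_0 t^\eps\bigl(t^kL_{\nut+1}+t^{\eps k}L_{\nut+k+1}\bigr)$, while the second term gives $C_0(h_0t^\eps)^{\nut}A_j^k\bigl(t^{k+1}L_{\nut}+t\cdot t^{\eps k}L_{\nut+k}\bigr)$. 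The task is now to reorganise everything into a multiple of $\Lambda(k+1,\nu)=t^{k+1}L_{\nut}+t^{\eps(k+1)}L_{\nut+k+1}$.

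This is achieved by invoking Lemma~\ref{D-AuxLemma1} with $\bM=\bL$, $\rho=t$, $R=t^\eps$ (these play exactly the role of $\rho$ and $R$ in the proof of Lemma~\ref{IteratesLemma}). Applied with $(j,k,l)=(1,\nut,k)$ it yields $t\cdot t^{\eps k}L_{\nut+k}\leq t^{k+1}L_{\nut}+t^{\eps(k+1)}L_{\nut+k+1}=\Lambda(k+1,\nu)$, which handles the second term. Applied with $(j,k,l)=(k,\nut,1)$ and then multiplied by $t$ it yields $t^{k+\eps}L_{\nut+1}\leq t^{k+1}L_{\nut}+t\cdot t^{\eps k}L_{\nut+k}\leq 2\Lambda(k+1,\nu)$, which together with the trivial bound $t^{\eps(k+1)}L_{\nut+k+1}\leq\Lambda(k+1,\nu)$ disposes of the first term. (A tiny check is needed for $k=0$, where the estimate on $t^{k+\eps}L_{\nut+1}$ can be read off directly from the definition of $\Lambda(1,\nu)$.) Collecting all contributions produces a bound of the form $(2+3h_0)C_0(h_0t^\eps)^{\nut}A_j^k\Lambda(k+1,\nu)$, and choosing $A_j\geq\max\{h_0,2+3h_0\}$ closes the induction.

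The main obstacle, as in Lemma~\ref{IteratesLemma}, is identifying the right compound quantity $\Lambda(k,\nu)$ and the right auxiliary variables $\rho,R$ so that the single structural inequality of Lemma~\ref{D-AuxLemma1} covers every mixed term produced by the recursion; once these are in place the calculation is purely mechanical, and in fact cleaner here than in Lemma~\ref{IteratesLemma} because there is no non-elliptic factor $t^{d-\eps}$ nor any Leibniz sum to control.
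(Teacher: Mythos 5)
Your proof is correct and follows essentially the same route as the paper: induction on $k$ with the compound quantity $\Lambda(k,\nu)=t^kL_{\nut}+t^{\eps k}L_{\nut+k}$ (the paper calls it $\Theta$) and repeated use of Lemma~\ref{D-AuxLemma1} with $\rho=t$, $R=t^\eps$. One small bookkeeping slip: the application of Lemma~\ref{D-AuxLemma1} with $(j,k,l)=(k,\nut,1)$ already gives $t^{k+\eps}L_{\nut+1}\leq t^{k+1}L_{\nut}+t^{\eps(k+1)}L_{\nut+k+1}=\Lambda(k+1,\nu)$ directly, so the "multiplied by $t$" step and the parenthetical $k=0$ caveat are unnecessary (and the intermediate bound you wrote does not actually come from that index choice); this does not affect the conclusion.
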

\begin{proof}
	We will prove the Lemma by induction in $k\in\N_0$.
	In the case $k=0$ we have $\Delta_0^j(x,t)=\psi(t^\eps(x-x_0))$.
	Thence 
	\begin{equation*}
		D_x^\nu\left(\psi(t^\eps(x-x_0)\right)=t^{\eps\nut}D^\nu\psi(t^\eps(x-x_0))
	\end{equation*}
and thus \eqref{PsiEstimate} gives that
\begin{equation*}
	\abs*{D^\nu_x\Delta_0^j(x,t)}\leq C_0(h_0t^\eps)^{\nut}L_{\nut}.
\end{equation*}
We will now show that if \eqref{Important} holds for $k$ then \eqref{Important} is also 
satisfied for $k+1$ when we choose $A_j$ suitably. For simplicity we set $R=t^\eps$.
From Lemma \ref{D-AuxLemma1} we obtain that
\begin{equation*}
	tL_{\nut+k}R^k\leq t^{k+1}L_{\nut}+L_{\nut+k+1}R^{k+1}.
\end{equation*}
We set
\begin{equation*}
	\Theta(k,\nu)=t^kL_{\nut}+R^kL_{\nut+k}
\end{equation*}
and conclude that 
\begin{equation}\label{Constant1}
	t\Theta(k,\nu)\leq t^{k+1}L_{\nut}+tL_{\nut+k}R^k\leq 2\Theta(k+1,\nu).
\end{equation}
For all $\alp\leq 1$ Lemma \ref{D-AuxLemma1} gives the following estimate:
\begin{equation}\label{Constant2}
	\begin{split}
		t^{1-\alp}R^\alp\Theta(k,\nu+\alpha)
		&\leq t^{k+1-\alp}L_{\nut+\alp}R^\alp
		+t^{1-\alp}L_{\nut+\alp+k}R^{\alp+k}\\
		&\leq 2\left(t^{k+1}L_{\nut}+L_{\nut+k+1}R^{k+1}\right)\\
		&=2\Theta(k+1,\nu)
	\end{split}
\end{equation}
Furthermore if $e_j$ denotes the $j$-th unit vector in $\R^n$ then we have
\begin{equation*}
	D_x^{\nu}\Delta_{k+1}^j(x,t)=D_x^{\nu+e_j}\Delta_k^j(x,t)+t\xi_{0,j}D_x^\nu\Delta_k^j(x,t).
\end{equation*}
Thus the induction hypothesis implies that
\begin{equation*}
	\begin{split}
	\abs*{D_x^\nu\Delta_{k+1}^j(x,t)}&\leq \abs*{D_x^{\nu+e_j}\Delta^j_k(x,t)}+\abs{t\xi_{0,j}D_x^\nu
		\Delta^j_k(x,t)}\\
	&\leq C_0(h_0t^\eps)^{\nut+1}A_j^k\left(t^kL_{\nut+1}+t^{\eps k}L_{\nut+k+1}\right)
+C_0(h_0t^\eps)^{\nut}A_j^kt
\abs{\xi_{0,j}}\left(t^{k}L_{\nut}+t^{\eps k}L_{\nut+k}\right)\\
&=C_0(h_0t^\eps)^{\nut}A_j^k\left(h_0t^\eps\Theta(k,\nu+e_j)+\abs{\xi_{0,j}}t\Theta(k,\nu)\right).
\end{split}
\end{equation*}
Now, if we recall that $R=t^\eps$ and $0\leq\abs{\xi_{0,j}}\leq 1$ then using
\eqref{Constant1} and \eqref{Constant2} allows us to conclude that
\begin{equation*}
	\abs*{D_x^\nu\Delta_{k+1}^j(x,t)}\leq C_0(h_0t^\eps)^{\nut}A_j^k2\left(h_0+
	\abs{\xi_{0,j}}\right)
	\Theta(k+1,\nu).
\end{equation*}
Therefore we have proven the Lemma if we choose $A_j\geq 2(h_0+\abs{\xi_{0,j}})$.
\end{proof}
If we set $\nu=0$ in Lemma \ref{D-AuxLemmaAlt} then 
\begin{equation*}
	\abs*{\Delta_k^j(x,t)}\leq C_0A_j^k\left(t^k+t^{\eps k}L_k\right).
\end{equation*}
Thus we have  that
\begin{equation*}
	\abs*{D_j^k u(x,t)}\leq C_0A_j^k\left(\int_1^\infty\negthickspace t^k\Phi_\bN(t)\,dt
	+L_k\int_1^\infty\negthickspace t^{\eps k}\Phi_\bN(t)\,dt\right)
\end{equation*}
for all $k\in\N_0$ and $j=1,\dotsc,n$. If $\bN$ satisfies \eqref{DerivClosed} then
by Proposition \ref{SanzProp} there are
constants $C_1,B_1$ such that 
\begin{equation*}
	\int_1^\infty\negthickspace t^k\Phi_\bN(t)\,dt\leq \int_0^\infty\negthickspace
	t^k\Phi_\bN(t)\,dt\leq C_1B_1^k N_k.
\end{equation*}
If there were constants $C_2,B_2>0$ such that 
\begin{equation}\label{LastEstimate}
	L_k\int_1^\infty\negthickspace t^{\eps k}\Phi_\bN(t)\,dt\leq C_2B_2^k N_k
\end{equation}
then we would be able to show that for each $j=1,\dotsc,n$ there are constants $C_3,h_j>0$ such that
\begin{equation*}
	\abs*{D_j^k u(x)}\leq C_3 h_j^k N_k,
\end{equation*}
i.e.~$u\in\vRou[D_j]{\bN}{\Omega}$ for all $j=1,\dotsc,n$. 

By our assumptions we know that $\bN$ is non-quasianalytic since $\bL$ is non-quasianalytic.
This further implies that $\bN$ satisfies \eqref{AnalInclusion}.
Because we suppose here also that \eqref{DerivClosed} holds for $\bN$ we know 
by \cite[Corollaire 3]{MR557524} (cf.~also \cite[Corollary 3.17]{Fuerdoes2022})  that
\begin{equation*}
	\bigcap_{j=1}^n \vRou[D_j]{\bN}{\Omega}=\Rou{\bN}{\Omega}.
\end{equation*}
Thence $u\in\Rou{\bN}{\Omega}$.
\begin{Thm}\label{OptimalThm1}
	Assume that the hypothesis of Theorem \ref{TechnicalTheorem} holds.
	If the weight sequence $\bN$ satisfies additionally \eqref{DerivClosed} then
	the function $u$ from Theorem \ref{TechnicalTheorem} is an optimal function of 
	$\Rou{\bN}{\Omega}$.
\end{Thm}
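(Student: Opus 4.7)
\emph{Proof plan.} Optimality of $u$ in $\Rou{\bN}{\Omega}$ decomposes into two claims: $u\in\Rou{\bN}{\Omega}$, and $u\notin\Rou{\bT}{\Omega}$ for every weight sequence $\bT\precnapprox\bN$. The second claim is already part of the conclusion of Theorem \ref{TechnicalTheorem}: it follows from the lower bound \eqref{OptimalEstimate} on $\abs{D_{\xi_0}^k u(x_0)}$, which is supplied by the left half of Proposition \ref{SanzProp} applied to the optimal $\{\bN\}$-flat function $G_\bN$. So only the membership $u\in\Rou{\bN}{\Omega}$ remains to be established.

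The plan is to invoke the iterates characterization for coordinate derivations. From $\bL\bV\preceq\bM\precnapprox\bN$ one obtains $\bL\preceq\bN$, so $\bN$ inherits non-quasianalyticity (and in particular \eqref{AnalInclusion}) from $\bL$; combined with the hypothesis \eqref{DerivClosed} on $\bN$, this places us in the scope of \cite[Corollaire 3]{MR557524} (cf.\ \cite[Corollary 3.17]{Fuerdoes2022}), which gives $\Rou{\bN}{\Omega}=\bigcap_{j=1}^n\vRou[D_j]{\bN}{\Omega}$. Hence it suffices to prove $u\in\vRou[D_j]{\bN}{\Omega}$ for every $j=1,\dotsc,n$. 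Differentiating \eqref{FctDef} under the integral sign produces the representation $D_j^k u(x)=\int_1^\infty\Delta_k^j(x,t)\Phi_\bN(t)e^{it\xi_0(x-x_0)}\,dt$, with the auxiliary functions $\Delta_k^j$ satisfying the short recursion written out just before the theorem. An induction on $k$, parallel to Lemma \ref{IteratesLemma} but using the one-variable algebraic trick of Lemma \ref{D-AuxLemma1}, yields Lemma \ref{D-AuxLemmaAlt}; specialising to $\nu=0$ gives the pointwise bound $\abs{\Delta_k^j(x,t)}\leq C_0 A_j^k(t^k+t^{\eps k}L_k)$ valid for $x\in B_0$ and $t\geq 1$.

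Inserting this bound into the integral representation splits $\abs{D_j^k u(x)}$ into two contributions. The first, $\int_1^\infty t^k\Phi_\bN(t)\,dt$, is handled directly by the upper half of Proposition \ref{SanzProp}---now applicable because \eqref{DerivClosed} holds for $\bN$---yielding a bound of the form $C_1 B_1^k N_k$. The second contribution requires the companion estimate \eqref{LastEstimate}, $L_k\int_1^\infty t^{\eps k}\Phi_\bN(t)\,dt\leq C_2 B_2^k N_k$, and securing it is the main obstacle. I plan to reduce \eqref{LastEstimate} to a purely sequential inequality: since $t\geq 1$, one dominates $t^{\eps k}\leq t^{\lceil\eps k\rceil}$ and applies Proposition \ref{SanzProp} a second time, leaving $L_k N_m\leq CB^k N_k$ with $m=\lceil\eps k\rceil$. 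This inequality should be derivable from the Theorem \ref{TechnicalTheorem} hypotheses $\bL\bV\preceq\bM$, $\bV\leq\bM$, $\bM\precnapprox\bN$ and $\bN\leq A\bV^\tau$ with $1<\tau<2d/(2d-1)$, together with the choice $\eps\leq 1/2$ satisfying $\tau\eps<1$ made inside the proof of Theorem \ref{TechnicalTheorem}: that choice keeps $V_m^\tau$ sub-exponentially smaller than $V_k$ when $m\approx\eps k$, and the translation through associated weight functions provided by Lemma \ref{auxlemma} should then deliver the desired bound. Once \eqref{LastEstimate} is in hand, combining the two contributions gives $\abs{D_j^k u(x)}\leq C_3 h_j^k N_k$ for each $j$, hence $u\in\vRou[D_j]{\bN}{\Omega}$; the intersection identity then completes the proof.
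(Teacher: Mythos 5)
Your structure matches the paper's: split optimality into the lower bound (already supplied by \eqref{OptimalEstimate}) and membership in $\Rou{\bN}{\Omega}$, reduce the latter to $u\in\vRou[D_j]{\bN}{\Omega}$ for each $j$ via \cite[Corollaire 3]{MR557524}, invoke Lemma \ref{D-AuxLemmaAlt} at $\nu=0$, and then split the resulting integral bound into the two contributions, the second of which is exactly \eqref{LastEstimate}. Where you diverge from the paper is in how you attack \eqref{LastEstimate}. The paper stays at the level of integrals: it writes $L_k t^{\eps k}\leq L_k V_k\exp(\omega_\bV(t^\eps))$, absorbs $L_k V_k\leq Ch^k N_k$ via $\bL\bV\preceq\bM\precnapprox\bN$, and then proves $\int_1^\infty\exp(-\omega_\bN(t/B)+\omega_\bV(t^\eps))\,dt<\infty$ by applying Lemma \ref{auxlemma} with the inequality $\tau<1/\eps$. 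You instead discretize: bound $t^{\eps k}\leq t^{\lceil\eps k\rceil}$, apply the upper half of Proposition \ref{SanzProp} a second time, and reduce to the purely sequential inequality $L_k N_{\lceil\eps k\rceil}\leq CB^k N_k$. That route does work — from $L_kV_k\leq Ch^kN_k$, $N_m\leq AV_m^\tau$, and the log-convexity $V_m\leq V_k^{m/k}$ you get $V_{\lceil\eps k\rceil}^\tau\leq C'V_k$ precisely because $\tau\eps<1$ (which is indeed a consequence of $\tau<d/(d-\eps)<1/\eps$ for $\eps<1/2$) — and it is arguably more elementary than the paper's, since it needs only the log-convexity of $\bV$ and no second appeal to Lemma \ref{auxlemma}. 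Two small caveats: you leave the verification of $L_k N_{\lceil\eps k\rceil}\leq CB^kN_k$ as a ``should be derivable,'' so the proposal is a plan rather than a proof; and the closing sentence reaching for ``the translation through associated weight functions provided by Lemma \ref{auxlemma}'' suggests you are mixing your sequential route with the paper's integral route — in your scheme Lemma \ref{auxlemma} is not needed, only the elementary estimate on $V_{\lceil\eps k\rceil}^\tau/V_k$.
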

\begin{proof}
By assumption we have that there are weight sequences $\bL$, $\bM$, $\bN$, $\bV$
and constants $d\in\N_0$, $A>1$ and $1<\tau< 2d/(2d-1)$ such that
$\gamma(\bN)>0$, $\bM\preceq\bN$, $\bL$ is non-quasianalytic, $\bV\leq \bM$ and $\bL\bV\preceq\bM$.
Furthermore $\bN\leq A\bV^\tau$ and in the proof of Theorem \ref{TechnicalTheorem} we have chosen
$\eps$ such that $\tau<d/(d-\eps)< 2d/(2d-1)$.

Since we suppose also that \eqref{DerivClosed} holds for $\bN$ we have only to show \eqref{LastEstimate} by the deliberations above.
By \eqref{Kernelestimates2}, the definition of $\omega_\bV$ and the fact that $\bL\bV\preceq\bM\precnapprox\bN$ there are constants $C_2,B_2,B>0$ such that
\begin{equation*}
	L_k\int_1^\infty\negthickspace t^{\eps k}\Phi_\bN(t)\,dt\leq C_2B_2^k
	N_k\int_{1}^{\infty}\negthickspace \exp\left(-\omega_\bN\left(\frac{t}{B}\right)+\omega_\bV(t^\eps)\right)\,dt.
\end{equation*}
Now, since $\bN\leq A\bV^{\tau}$ and $\tau<d/(d-\eps)<1/\eps$ for all $d\geq 1$, (by our choice above we have that $\eps<1/2$) we can apply Lemma \ref{auxlemma} and obtain that there is
some constant $C>0$ such that
\begin{equation*}
	\omega_\bV(s)\leq \tau^{-1}\omega_\bN\left(\frac{s^{1/\eps}}{B}\right)+C
\end{equation*}
for all $s\geq 0$. Thence 
\begin{equation*}
	\int_1^\infty\negthickspace\exp\left(-\omega_\bN\left(\frac{t}{B}\right)
	+\omega_\bV(t^\eps)\right)\,dt
	\leq C\int_1^\infty\negthickspace \exp\left(-(1-\tau^{-1})\omega_\bN\left(\frac{t}{B}\right)\right)\,dt
\end{equation*}
and the integral on the right-hand side converges since $\tau>1$ and $\omega_\bN(s)$
increases faster than any power of $\log s$.
Thus we have proven \eqref{LastEstimate} in this case.
\end{proof}
\begin{Cor}
	Suppose that the hypothesis of Corollary \ref{GammaInfinity} holds.
	If the weight sequence additionally satisfies \eqref{DerivClosed} then
	the function $u$ in the statement of Corollary \ref{GammaInfinity} is
	an optimal function in $\Rou{\bM^\nu}{\Omega}$ for some $\nu>1$.
\end{Cor}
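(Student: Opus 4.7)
The plan is to apply Theorem \ref{OptimalThm1} directly to the function $u$ produced in the proof of Corollary \ref{GammaInfinity}. In that proof, the hypotheses of Theorem \ref{TechnicalTheorem} were verified with the specific choices $\bL=\bM^{q(1-\sigma)}$, $\widetilde{\bM}=\bM^q$, $\bV=\bM^{q\sigma}$, and $\bN=\bM^{q\rho}$, where $0<q,\sigma<1$ and $\rho>1$ are chosen so that $1<q\rho$ and $1<\rho<\frac{2d}{2d-1}\sigma$. In particular, setting $\nu:=q\rho>1$, we have $\bN=\bM^\nu$, and the entire list of assumptions required in Theorem \ref{TechnicalTheorem} is already in place.

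To invoke Theorem \ref{OptimalThm1}, it remains only to check that $\bN$ satisfies the extra condition \eqref{DerivClosed}. This is where the additional hypothesis on $\bM$ enters. If $\sup_{k\in\N}\sqrt[k]{M_k/M_{k-1}}=C<\infty$, then
\begin{equation*}
\sqrt[k]{M_k^\nu/M_{k-1}^\nu} = \bigl(\sqrt[k]{M_k/M_{k-1}}\bigr)^\nu \leq C^\nu<\infty,
\end{equation*}
so condition \eqref{DerivClosed} is preserved under taking positive powers of a weight sequence. Hence $\bN=\bM^\nu$ inherits \eqref{DerivClosed} from $\bM$.

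With all hypotheses of Theorem \ref{OptimalThm1} verified, we immediately conclude that $u\in\Rou{\bN}{\Omega}=\Rou{\bM^\nu}{\Omega}$ and $u\notin\Rou{\bT}{\Omega}$ for any weight sequence $\bT\precnapprox\bN$. By definition this says that $u$ is an optimal function of the class $\Rou{\bM^\nu}{\Omega}$. The argument contains no real obstacle: all the analytic work lies in Theorem \ref{OptimalThm1} and the construction already carried out for Corollary \ref{GammaInfinity}, and the only new ingredient is the elementary observation that \eqref{DerivClosed} is stable under raising a weight sequence to a positive power.
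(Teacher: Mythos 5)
Your proof is correct and follows essentially the same route as the paper's (which is stated very tersely: stability of \eqref{DerivClosed} under powers plus an appeal to Theorem \ref{OptimalThm1} applied to the construction in Corollary \ref{GammaInfinity}). You spell out the details the paper leaves implicit, in particular explicitly identifying $\nu=q\rho>1$ so that $\bN=\bM^\nu$, which is a useful clarification.
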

\begin{proof}
	Observe that if $\bM$ satisfies \eqref{DerivClosed} then \eqref{DerivClosed} holds for 
	any weight sequence $\bM^r$, $r>0$.
	The statement then follows from the proof of Corollary \ref{GammaInfinity} if we also 
	take Theorem \ref{OptimalThm1} into account.
\end{proof}
\begin{Thm}
	Suppose that the assumptions of Theorem \ref{FiniteVersion} hold.
	If the weight sequence $\bM$ satisfies also \eqref{DerivClosed} then 
	there is some $\nu>1$ such that the function $u$ in Theorem \ref{FiniteVersion}
	is an optimal function of the class $\Rou{\bM^\nu}{\Omega}$.
\end{Thm}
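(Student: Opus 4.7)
The plan is to imitate the proof of Theorem \ref{OptimalThm1}, but now inside the concrete setup of the proof of Theorem \ref{FiniteVersion}. Recall that there $u$ is constructed via \eqref{FctDef} with the weight sequences $\bL=\bT^{\gamma_0}$, $\bV=\bT^{\gamma_1}$ (where $\gamma_1=\widetilde{\gamma}-\gamma_0$), $\widetilde{\bM}=\bT^{\widetilde{\gamma}}$ and $\bN=\bT^{\gamma'}$, with $\bT=\bM^{1/\gamma}$, $\gamma=\gamma(\bM)$, and $\gamma<\widetilde{\gamma}<\gamma'$. Choosing $\nu:=\gamma'/\gamma>1$, we have $\bN=\bM^{\nu}$, and the lower bound \eqref{OptimalEstimate} already yields $u\notin\Rou{\bT'}{\Omega}$ for every weight sequence $\bT'\precnapprox\bN$. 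Hence it remains to prove $u\in\Rou{\bM^{\nu}}{\Omega}$.

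Since $\bM$ satisfies \eqref{DerivClosed}, so does $\bN=\bM^\nu$, and $\bN$ is non-quasianalytic (because $\bL\leq\bN$ and $\bL$ is strongly non-quasianalytic by construction). By \cite[Corollaire 3]{MR557524} it then suffices to prove $u\in\vRou[D_j]{\bN}{\Omega}$ for each $j=1,\dots,n$. Applying Lemma \ref{D-AuxLemmaAlt} at the zero multi-index gives
\begin{equation*}
\abs*{D_j^k u(x)}\leq C_0 A_j^k\left(\int_1^\infty\! t^k\Phi_\bN(t)\,dt+L_k\int_1^\infty\! t^{\eps k}\Phi_\bN(t)\,dt\right),
\end{equation*}
and Proposition \ref{SanzProp} (applicable thanks to \eqref{DerivClosed} for $\bN$) bounds the first integral by $C_1 B_1^k N_k$. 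The entire game reduces to establishing the analogue of \eqref{LastEstimate} for the second integral.

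For this I would combine the fundamental bound $s^k\leq V_k\exp(\omega_\bV(s))$ with the identity $\bL\bV=\bT^{\widetilde{\gamma}}=\widetilde{\bM}\leq\bN$ (the last step uses $\widetilde{\gamma}<\gamma'$ and $T_k\geq 1$), yielding $L_k V_k\leq N_k$ and hence
\begin{equation*}
L_k t^{\eps k}\leq (B')^k N_k\exp\bigl(\omega_\bV(t^\eps/B')\bigr)
\end{equation*}
for any $B'>0$. The decisive choice is $B':=B_2^{\eps}$ with $B_2$ taken from \eqref{Kernelestimates2}: by the scaling identity \eqref{omegas} this gives $\omega_\bV(t^\eps/B_2^{\eps})=\gamma_1\omega\bigl((t/B_2)^{\eps/\gamma_1}\bigr)$ and $\omega_\bN(t/B_2)=\gamma'\omega\bigl((t/B_2)^{1/\gamma'}\bigr)$, so that both terms have a common $(t/B_2)^{(\cdot)}$ inside $\omega$. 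A direct computation from $\eps=d(\widetilde{\gamma}-\gamma_0)/(2d\widetilde{\gamma}-\gamma_0)$ and $\gamma'=(2d\widetilde{\gamma}-\gamma_0)/(2d-1)$ gives $\eps/\gamma_1=d/((2d-1)\gamma')\leq 1/\gamma'$, whence monotonicity of $\omega$ yields
\begin{equation*}
-\omega_\bN(t/B_2)+\omega_\bV(t^\eps/B_2^{\eps})\leq -(\gamma'-\gamma_1)\omega\bigl((t/B_2)^{1/\gamma'}\bigr)
\end{equation*}
for $t$ large. Since $\gamma'>\gamma_1$ and $\omega(s)$ outgrows every $\log s^p$, the resulting integral converges, giving $L_k\int_1^\infty t^{\eps k}\Phi_\bN(t)\,dt\leq C h^k N_k$. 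The chief obstacle, just as in the proof of Theorem \ref{FiniteVersion}, is matching the inner arguments of $\omega$ on both sides; the rescaling by $B_2^{\eps}$ resolves this and yields $u\in\vRou[D_j]{\bN}{\Omega}$ for every $j$, hence $u\in\Rou{\bM^{\nu}}{\Omega}$ as required.
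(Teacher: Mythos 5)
Your proposal is correct and follows essentially the same route as the paper's proof: reduce to $u\in\vRou[D_j]{\bN}{\Omega}$ via Lemma \ref{D-AuxLemmaAlt} and \cite[Corollaire 3]{MR557524}, bound the first integral by Proposition \ref{SanzProp}, and handle the second by combining $s^k\leq V_k\exp(\omega_\bV(s))$ (rescaled by $B_2^\eps$) with the scaling identity \eqref{omegas} to reduce both exponents to the common quantity $\omega\bigl((t/B_2)^{1/\gamma'}\bigr)$, after checking $\eps/\gamma_1\leq 1/\gamma'$ and $\gamma'>\gamma_1$. The only cosmetic difference is that you observe the exact identity $L_kV_k=\widetilde{M}_k\leq N_k$, whereas the paper invokes $\bL\bV\preceq\bM$ and absorbs the constants into $C_2 B_2^k$; the substance is the same.
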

\begin{proof}
	By assumption $1<\gamma=\gamma(\bM)<\infty$ and as in the proof of 
	Theorem \ref{FiniteVersion} we set $\bT=\bM^{1/\gamma}$.
	We use the same choices of $\gamma_0$, $\widetilde{\gamma_0}$, $\gamma^\prime$ and $\eps$
	as in the proof of Theorem \ref{FiniteVersion} and 
	set $\bL=\bT^{\gamma_0}$, $\bV=\bT^{\tilde{\gamma}-\gamma_0}$ and $\bN=\bT^{\gamma^\prime}$.
	Note that $\bN=\bM^{\nu}$ with $\nu=\gamma^\prime/\gamma>1$ since $\gamma^\prime>\gamma$
	and therefore $\bN$ satisfies \eqref{DerivClosed} if $\bM$ does.
	
	With these choices of $\bL$, $\bV$, $\bN$ and $\eps$ we have defined $u$ in the proof
	of Theorem \ref{FiniteVersion} by \eqref{FctDef}. 
	Thus using the arguments before Theorem \ref{OptimalThm1} we have only to check if
	\eqref{LastEstimate} holds in order to show that $u$ is an optimal function 
	of the class $\Rou{\bN}{\Omega}$.

By \eqref{Kernelestimates2} we have that there are constants $A,B>0$ such that
\begin{equation*}
	\abs*{\Phi_\bN(t)}\leq A\exp\left(-\omega_\bN\left(\frac{t}{B}\right)\right)
\end{equation*}
for all $t\geq 0$. Now we use
the fact that
\begin{equation*}
	\frac{t^{\eps k}}{B^{\eps k}}\leq V_k\exp\left(\omega_\bV\left(\frac{t^\eps}{B^\eps}\right)\right)
\end{equation*}
to obtain that 
\begin{equation}\label{LastEstimate2}
	L_k\int_1^\infty\negthickspace t^{\eps k}\Phi_\bN(t)\,dt\leq C_2B_2^k
	N_k\int_{1}^{\infty}\negthickspace \exp\left(-\omega_\bN\left(\frac{t}{B}\right)+
	\omega_\bV\left(\frac{t^\eps}{B^\eps}\right)\right)\,dt
\end{equation}
for some constants $C_2,B_2>0$ since $\bV\leq\bM$ and $\bL\bV\preceq\bM$.
If $\omega$ is the weight function associated to $\bT$ then we recall for the weight functions associated to the weight sequences $\bV$ and $\bN$ the following identities from 
the proof of Theorem \ref{FiniteVersion}:
\begin{align*}
	\omega_{\bN}(s)&=\gamma^\prime\omega\left(s^{1/\gamma^\prime}\right),\qquad s\geq 0,\\
	\omega_{\bV}(s)&=\gamma_1\omega\left(s^{1/\gamma_1}\right), \qquad s\geq 0,
\end{align*}
where $\gamma_1=\widetilde{\gamma}-\gamma_0$.
 By our choices we have that $\gamma_1/\eps=(2d\widetilde{\gamma}-\gamma_0)/d$ 
 and thus $\gamma_1/\eps \geq \gamma^\prime$. Therefore we conclude that
\begin{equation*}
	\omega_\bV\left(\frac{t^\eps}{B^\eps}\right)
	=\gamma_1\omega\left(\frac{t^{\eps/\gamma_1}}{B^{\eps/\gamma_1}}\right)
	\leq \gamma_1\omega\left(\frac{t^{1/\gamma^\prime}}{B^{1/\gamma^\prime}}\right) 
\end{equation*}
for $t\geq B$
since $\omega$ is an increasing function on $(0,\infty)$.
Hence
\begin{equation*}
	-\omega_{\bN}\left(\frac{t}{B}\right)+\omega_\bV\left(\frac{t^\eps}{B^\eps}\right)
	\leq -(\gamma^\prime-\gamma_1)\omega\left(\frac{t^{1/\gamma^\prime}}{B^{1/\gamma^\prime}}\right)
\end{equation*}
for $t\geq B$
and since $\gamma^\prime>\gamma_1$  it follows that
the integral on the right-hand side of \eqref{LastEstimate2} converges.
Thus we have proven \eqref{LastEstimate}.
\end{proof}

\bibliographystyle{abbrv}
\bibliography{vectors}

\begin{thebibliography}{10}

\bibitem{MR654409}
M.~S. Baouendi and G.~Métivier.
\newblock Analytic vectors of hypoelliptic operators of principal type.
\newblock {\em American Journal of Mathematics}, 104(2):287--319, 1982.

\bibitem{Bierstone1980}
E.~Bierstone.
\newblock Differentiable functions.
\newblock {\em Bol. Soc. Bras. Mat.}, 11(2):139--189, 1980.

\bibitem{MR3380075}
C.~Boiti and D.~Jornet.
\newblock The problem of iterates in some classes of ultradifferentiable
  functions.
\newblock In {\em Pseudo-differential operators and generalized functions},
  volume 245 of {\em Oper. Theory Adv. Appl.}, pages 21--33.
  Birkh\"{a}user/Springer, Cham, 2015.

\bibitem{MR3652556}
C.~Boiti and D.~Jornet.
\newblock A simple proof of {K}otake-{N}arasimhan theorem in some classes of
  ultradifferentiable functions.
\newblock {\em J. Pseudo-Diff. Oper. Appl.}, 8:297--317, 2017.

\bibitem{MR548225}
P.~Bolley and J.~Camus.
\newblock Powers and {G}evrey's regularity for a system of differential
  operators.
\newblock {\em Czechoslovak Math. J.}, 29(4):649--661, 1979.

\bibitem{MR632764}
P.~Bolley and J.~Camus.
\newblock Regularité {G}evrey et itérés pour une classe d'opérateurs
  hypoelliptiques.
\newblock {\em Comm. Part. Diff. Eq.}, 6(10):1057--1110, 1981.
\newblock 

\bibitem{MR557524}
P.~Bolley, J.~Camus, and C.~Mattera.
\newblock Analyticité microlocale et itérés d'opérateurs.
\newblock In {\em Séminaire {G}oulaouic-{S}chwartz (1978/1979)}, pages Exp.
  No. 13, 9. École Polytech., Palaiseau, 1979.

\bibitem{BonetMeiseMelikhov07}
J.~Bonet, R.~Meise, and S.~N. Melikhov.
\newblock {A comparison of two different ways to define classes of
  ultradifferentiable functions}.
\newblock {\em Bulletin of the Belgian Mathematical Society - Simon Stevin},
  14(3):425--444, 2007.

\bibitem{Bonet1992}
J.~Bonet, R.~Meise, and B.~A. Taylor.
\newblock On the range of the {Borel} map for classes of non-quasianalytic
  functions.
\newblock In {\em Progress in functional analysis. Proceedings of the
  international functional meeting on the occasion of the 60th birthday of
  Professor M. Valdivia in Pe\~n\'{\i}scola, Spain, 22-27 October, 1990}, pages
  97--111. Amsterdam etc.: North-Holland, 1992.

\bibitem{MR1875423}
C.~Bouzar and R.~Cha\"{\i}li.
\newblock R\'{e}gularit\'{e} des vecteurs de {B}eurling de syst\`emes
  elliptiques.
\newblock {\em Maghreb Math. Rev.}, 9(1-2):43--53, 2000.

\bibitem{Bouzar2003}
C.~Bouzar and R.~Chaili.
\newblock Iterates of differential operators.
\newblock In {\em Progress in analysis, {V}ol. {I}, {II} ({B}erlin, 2001)},
  pages 135--141. World Sci. Publ., River Edge, NJ, 2003.

\bibitem{MR1052587}
R.~W. Braun, R.~Meise, and B.~A. Taylor.
\newblock Ultradifferentiable functions and {F}ourier analysis.
\newblock {\em Results Math.}, 17(3-4):206--237, 1990.

\bibitem{MR3556261}
N.~Braun~Rodrigues, G.~Chinni, P.~D. Cordaro, and M.~R. Jahnke.
\newblock Lower order perturbation and global analytic vectors for a class of
  globally analytic hypoelliptic operators.
\newblock {\em Proc. Amer. Math. Soc.}, 144(12):5159--5170, 2016.

\bibitem{MR2801277}
D.~Calvo and L.~Rodino.
\newblock Iterates for operators and {G}elfand-{S}hilov classes.
\newblock {\em Integr. Transf. Spec. F.}, 22:269--276, 2011.

\bibitem{MR3043156}
J.~E. Castellanos, P.~D. Cordaro, and G.~Petronilho.
\newblock Gevrey vectors in involutive tube structures and {G}evrey regularity
  for the solutions of certain classes of semilinear systems.
\newblock {\em J. Anal. Math.}, 119:333--364, 2013.

\bibitem{Chaili2018}
R.~Cha\"{\i}li and M.~Djilali.
\newblock Iterate problem in {R}oumieu spaces of systems of interior and
  boundary differential operators.
\newblock {\em Rend. Semin. Mat. Univ. Politec. Torino}, 76(1):41--54, 2018.

\bibitem{Derridj2019a}
M.~Derridj.
\newblock On {G}evrey vectors of {L}. {H}\"{o}rmander's operators.
\newblock {\em Trans. Amer. Math. Soc.}, 372(6):3845--3865, 2019.

\bibitem{MR4098643}
M.~Derridj.
\newblock Gevrey regularity of {G}evrey vectors of second-order partial
  differential operators with non-negative characteristic form.
\newblock {\em Complex Anal. Synerg.}, 6(2):Paper No. 10, 16, 2020.

\bibitem{https://doi.org/10.48550/arxiv.2212.11905}
S.~Fürdös.
\newblock The {K}otake-{N}arasimhan {T}heorem in general ultradifferentiable
  classes, 2022.
\newblock Preprint available at \url{https://arxiv.org/abs/2212.11905}.

\bibitem{Fuerdoes2022}
S.~Fürdös and G.~Schindl.
\newblock The theorem of iterates for elliptic and non-elliptic operators.
\newblock {\em Journal of Functional Analysis}, 283(5):109554, 2022.

\bibitem{HoepfnerRampazo}
G.~Hoepfner and P.~Rampazo.
\newblock {T}he global {K}otake-{N}arasimhan theorem.
\newblock {\em {P}roc. {A}mer. {M}ath. {Soc.}}, 150:1041--1057, 2022.

\bibitem{MR1996773}
L.~H\"{o}rmander.
\newblock {\em The analysis of linear partial differential operators. {I}}.
\newblock Classics in Mathematics. Springer-Verlag, Berlin, 2003.
\newblock Distribution theory and Fourier analysis, Reprint of the second
  (1990) edition.

\bibitem{JimenezGarrido2019}
J.~Jim{\'e}nez-Garrido, J.~Sanz, and G.~Schindl.
\newblock Indices of {O}-regular variation for weight functions and weight
  sequences.
\newblock {\em Rev. R. Acad. Cienc. Exactas F{\'{\i}}s. Nat., Ser. A Mat.,
  RACSAM}, 113(4):3659--3697, 2019.

\bibitem{sectorextensions}
J.~Jim{\'e}nez-Garrido, J.~Sanz, and G.~Schindl.
\newblock Sectorial extensions, via {L}aplace transforms, in ultraholomorphic
  classes defined by weight functions.
\newblock {\em Results Math.}, 74(27), 2019.

\bibitem{sectorialextensions1}
J.~Jim{\'e}nez-Garrido, J.~Sanz, and G.~Schindl.
\newblock Sectorial extensions for ultraholomorphic classes defined by weight
  functions.
\newblock {\em {M}ath.\ {N}achr.}, 293(11):2140--2174, 2020.

\bibitem{JimenezGarrido2022a}
J.~Jiménez-Garrido, I.~Miguel-Cantero, J.~Sanz, and G.~Schindl.
\newblock Optimal flat functions in {R}oumieu-{C}arleman ultraholomorphic
  classes in sectors.
\newblock Preprint available at \url{https://arxiv.org/pdf/2205.07605v2.pdf},
  2022.

\bibitem{komatsu1962}
H.~Komatsu.
\newblock A proof of {K}otak\'{e} and {N}arasimhan's theorem.
\newblock {\em Proc. Japan Acad.}, 38(9):615--618, 1962.

\bibitem{Komatsu73}
H.~Komatsu.
\newblock Ultradistributions. {I}. {S}tructure theorems and a characterization.
\newblock {\em J. Fac. Sci. Univ. Tokyo Sect. IA Math.}, 20:25--105, 1973.

\bibitem{KotakeNarasimhan}
T.~Kotake and M.~Narasimhan.
\newblock Regularity theorems for fractional powers of a linear elliptic
  operator.
\newblock {\em Bulletin de la Soci{\'e}t{\'e} Math{\'e}matique de France},
  90:449--471, 1962.

\bibitem{doi:10.1080/03605307808820078}
G.~Métivier.
\newblock Propriete des iteres et ellipticite.
\newblock {\em Communications in Partial Differential Equations},
  3(9):827–876, 1978.

\bibitem{MR107176}
E.~Nelson.
\newblock Analytic vectors.
\newblock {\em Ann. of Math. (2)}, 70:572--615, 1959.

\bibitem{Petzsche1988}
H.-J. Petzsche.
\newblock On {E}. {Borel}'s theorem.
\newblock {\em Math. Ann.}, 282(2):299--313, 1988.

\bibitem{Thilliez2003}
V.~Thilliez.
\newblock Division by flat ultradifferentiable functions and sectorial
  extensions.
\newblock {\em Result. Math.}, 44(1-2):169--188, 2003.

\bibitem{MR2384272}
V.~Thilliez.
\newblock On quasianalytic local rings.
\newblock {\em Expo. Math.}, 26(1):1–23, 2008.

\bibitem{Vuckovic2016}
{\DJ}.~Vu{\v{c}}kovi{\'c} and J.~Vindas.
\newblock Eigenfunction expansions of ultradifferentiable functions and
  ultradistributions in {{\(\mathbb R^n\)}}.
\newblock {\em J. Pseudo-Differ. Oper. Appl.}, 7(4):519--531, 2016.

\end{thebibliography}
\end{document}